% OppenSchur.tex - Oppenheim-Schur's inequality for block matrices and RKHS
% For arXiv.org
%
% by Akira Yamada, e-mail: yamada@u-gakugei.ac.jp

\documentclass{amsart}

\usepackage[bookmarks=true,bookmarksnumbered=true,bookmarkstype=toc,pdfusetitle]
{hyperref}
\usepackage{mathtools,amssymb,amsthm}
\mathtoolsset{showonlyrefs,showmanualtags}
\usepackage[abbrev]{amsrefs}
\usepackage{tikz-cd}
\usepackage{bm,bbm}
\usepackage[shortlabels]{enumitem}
\setlist[enumerate]{label=\textup{(\roman*)}}
\AtBeginDocument{
	\def\MR#1{}
}

\theoremstyle{plain}
\newtheorem{thm}{Theorem}[section]
\newtheorem{lem}{Lemma}[section]
\newtheorem{cor}{Corollary}[section]
\newtheorem{prp}{Proposition}[section]
\theoremstyle{definition}
\newtheorem{defn}{Definition}[section]
\theoremstyle{remark}
\newtheorem{rem}{Remark}[section]

\newcommand{\calH}{\mathcal{H}}
\newcommand{\calK}{\mathcal{K}}
\newcommand{\calL}{\mathcal{L}}
\newcommand{\calM}{\mathcal{M}}
\newcommand{\ol}{\overline}
\newcommand{\al}{\alpha}

\newcommand{\gam}{\gamma}
\newcommand{\eps}{\epsilon}
\newcommand{\lam}{\lambda}
\newcommand{\sig}{\sigma}
\newcommand{\imp}{\implies}
\newcommand{\incl}{\hookrightarrow}
\newcommand{\ninf}{{n\to\infty}}
\newcommand{\ex}{\exists}
\newcommand{\es}{\emptyset}
\newcommand{\fa}{\forall}
\newcommand{\sm}{\setminus}
\newcommand{\C}{\mathbb{C}}
\newcommand{\N}{\mathbb{N}}
\newcommand{\R}{\mathbb{R}}
\newcommand{\Llarrow}{\Longleftarrow}
\newcommand{\inner}[1]{\langle#1\rangle}
\newcommand{\biginner}[1]{\bigl\langle#1\bigr\rangle}
\newcommand{\blt}{\ensuremath{\bullet}}
\newcommand{\1}{\mathbbm{1}}
\DeclareMathOperator{\ev}{ev}
\DeclareMathOperator{\ran}{ran}
\DeclareMathOperator{\rank}{rank\,}
\DeclareMathOperator{\re}{Re\,}
\DeclareMathOperator{\Span}{span\,}

\begin{document}

	\title{Oppenheim-Schur's inequality and RKHS}
	\author{Akira Yamada}
	\date{}
	\email{yamada@u-gakugei.ac.jp}
	\pagestyle{headings}

\begin{abstract}
	In 2012, we obtained Oppenheim's inequality for positive semidefinite matrices
	and its equality condition by the reproducing kernel method.
	In this paper, as a continuation, we give a reproducing kernel
	proof of the block matrix version of the Oppenheim-Schur's inequality
	and its equality condition in the positive definite case.
\end{abstract}

\maketitle

\section{Introduction}

Let $A=(a_{ij})$ and $B=(b_{ij})\in M_n$ be $n\times n$ positive semidefinite complex matrices.
The matrix $(a_{ij}b_{ij})\in M_n$ is called the {\em Hadamard product} or the
{\em Schur product} of $A$ and $B$, and is denoted by $A\circ B$.

\begin{thm}[e.g.~\cite{Oppenheim30}]
	The following inequalities hold:
	\begin{enumerate}
		\item Hadamard's inequality:
			\begin{equation} \label{neq:Hadamard}\noeqref{neq:Hadamard}
				|A|\le a_{11}a_{22}\dots a_{nn}.
			\end{equation}
		\item	Oppenheim's inequality:
				\begin{equation} \label{neq:Oppenh}\noeqref{neq:Oppenh}
					|A\circ B|\ge |A|b_{11}\cdots b_{nn}.
				\end{equation}
		\item Oppenheim-Schur's inequality:
				\begin{equation} \label{neq:OppenSchur}\noeqref{neq:OppenSchur}
					|A\circ B|+|A||B|\ge|A|b_{11}\cdots b_{nn}+|B|a_{11}\cdots a_{nn}.
				\end{equation}
	\end{enumerate}
\end{thm}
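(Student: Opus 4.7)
The idea is to interpret all three determinants as squared wedge-product norms in suitable reproducing kernel Hilbert spaces. Let $\calH_A,\calH_B$ be the RKHSs on $\{1,\dots,n\}$ with reproducing kernels $A,B$, and write $k^A_i=A(\cdot,i)\in\calH_A$ and $k^B_i=B(\cdot,i)\in\calH_B$ for the canonical vectors. Then $\inner{k^A_i,k^A_j}=a_{ij}$, $\inner{k^B_i,k^B_j}=b_{ij}$, and $\inner{k^A_i\otimes k^B_i,k^A_j\otimes k^B_j}=a_{ij}b_{ij}$ in $\calH_A\otimes\calH_B$. Standard Gram-determinant identities give
\[
|A|=\bigl\|\textstyle\bigwedge_{i=1}^n k^A_i\bigr\|^2,\qquad
|A\circ B|=\bigl\|\textstyle\bigwedge_{i=1}^n(k^A_i\otimes k^B_i)\bigr\|^2,
\]
with $a_{11}\cdots a_{nn}=\prod_i\|k^A_i\|^2$, $b_{11}\cdots b_{nn}=\prod_i\|k^B_i\|^2$, and analogous identities for $B$. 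In this language, (i) is Hadamard's classical wedge estimate $\|\bigwedge v_i\|^2\le\prod\|v_i\|^2$ applied to $v_i=k^A_i$, and (ii) is the author's 2012 result, proved by a wedge-in-tensor-product comparison in $\bigwedge^n(\calH_A\otimes\calH_B)$. Moreover (iii) together with (i) implies (ii), so the only substantive step is (iii).

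For (iii) my starting point is the observation that in the case $n=2$ the inequality is in fact an \emph{equality}: both sides expand to $2a_{11}a_{22}b_{11}b_{22}-a_{11}a_{22}|b_{12}|^2-b_{11}b_{22}|a_{12}|^2$. Equivalently (iii) reads
\[
\bigl\|\textstyle\bigwedge_{i=1}^n(k^A_i\otimes k^B_i)\bigr\|^2
\;\ge\;|A|\,b_{11}\cdots b_{nn}+|B|\,a_{11}\cdots a_{nn}-|A|\,|B|,
\]
which is saturated at $n=2$. I would therefore seek an explicit orthogonal decomposition of the wedge $\bigwedge_{i=1}^n(k^A_i\otimes k^B_i)\in\bigwedge^n(\calH_A\otimes\calH_B)$ into a principal component whose squared norm equals the right-hand side and a remainder whose squared norm is manifestly nonnegative. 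The 2012 proof of (ii) already extracts a distinguished component of squared norm $|A|\,b_{11}\cdots b_{nn}$; the task is to enlarge that subspace symmetrically in $A$ and $B$ so that inclusion--exclusion reproduces the three-term signed combination $|A|\pi_B+|B|\pi_A-|A||B|$, where $\pi_A=a_{11}\cdots a_{nn}$ and $\pi_B=b_{11}\cdots b_{nn}$.

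The main obstacle is to calibrate this enlarged subspace so that the three contributions pair correctly: naively taking the sum of the 2012 subspace with its $A\leftrightarrow B$ transpose overcounts, and the squared norm of the projection of the wedge onto their intersection must equal exactly $|A|\,|B|$ for the inclusion--exclusion to close. A technically cleaner alternative is induction on $n$ via Schur complements: the $n=2$ identity supplies the base case, and the inductive step exploits the RKHS interpretation of the Schur complement of $k^A_n\otimes k^B_n$ against $\Span\{k^A_i\otimes k^B_i:i<n\}$ as a squared distance in $\calH_A\otimes\calH_B$, which can then be compared, via the induction hypothesis, to the corresponding squared distances in $\calH_A$ and $\calH_B$ alone.
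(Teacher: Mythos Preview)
The paper does not prove this classical theorem in isolation; it is stated with a citation and then recovered as the scalar special case ($m=2$, $t_p=1$) of the block-matrix Oppenheim--Schur theorem in \S5. The argument there differs from your wedge-product framing. Rather than decomposing $\bigwedge_i(k_i^A\otimes k_i^B)$, the paper works through the inner-product interpolation problem: the minimum norms $\lam_i^A$ satisfy $\prod_i(\lam_i^A)^{-2}=|A|$ (Corollary~\ref{cor:minnorm}, Lemma~\ref{lem:prod det}), and the core estimate (Theorem~\ref{thm:main_ineq}, specialised) comes from \emph{exhibiting an explicit interpolant} in the Hadamard-product RKHS,
\[
h=(\lam_i^A)^2(\lam_i^B)^2\,k_i^A*k_i^B
-\bigl((\lam_i^A)^2k_i^A-f_i^A\bigr)*\bigl((\lam_i^B)^2k_i^B-f_i^B\bigr),
\]
and bounding $\|h\|$ via the pullback inequality~\eqref{eq:rest_ineq}. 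Writing $s_i^A=(\lam_i^A)^{-2}=|(A)_i|/|(A)_{i-1}|$ for the successive Schur complements, this gives the per-index estimate $s_i^{A\circ B}\ge s_i^Ab_{ii}+s_i^Ba_{ii}-s_i^As_i^B$; the elementary rearrangement of Lemma~\ref{lem:neq_elem} applied to the product over $i$ then yields~(iii).

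Your plan is incomplete at precisely this point. The orthogonal-decomposition route is left with an explicitly acknowledged unresolved obstacle. The Schur-complement induction is a legitimate skeleton, but your description misplaces the role of the induction hypothesis: the hypothesis applies to the $(n{-}1)\times(n{-}1)$ leading blocks and does \emph{not} by itself control the single squared distance $s_n^{A\circ B}$ at step $n$. The inductive step needs, as a separate input, the per-step inequality $s_n^{A\circ B}\ge s_n^Ab_{nn}+s_n^Ba_{nn}-s_n^As_n^B$, and that is where the real work lies---it is exactly what the paper proves by constructing the test function $h$ above. Once that per-step bound is available, either the paper's product-and-rearrange route or a straight induction (closing the cross terms with Hadamard's inequality) finishes~(iii); without it, neither of your two alternatives can be completed.
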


Our aim is, as a continuation of~\cite{Yamada12}, to prove an extension of the
Oppenheim-Schur inequality for positive semidefinite block matrices by the
theory of the reproducing kernel due to Aronszajn~\cite{Aronszajn50}
and Schwartz~\cite{Schwartz64}, and obtain equality conditions in the
positive definite cases.
Our main tool is the identity~\eqref{eq:lam det} between
the determinant of a positive semidefinite matrix
and the minimum norms of solutions of the inner product in a RKHS
with the matrix as the reproducing kernel.
We also make use of a RKHS which is contained in another Hilbert space,
which is called a {\em sub-Hilbert RKHS}.
Positive semidefinite block matrices are considered as the reproducing kernels
of RKHSs,
which are the most basic and fundamental examples of sub-Hilbert RKHS.\@

\section{Interpolation problem and RKHS}

Throughout the paper, all operators are continuous linear,
and all vector spaces are Hilbert spaces unless otherwise stated.
Using the interpolation problem for inner products in RKHS,
we shall give simple proofs of Oppenheim-Schur's inequality for matrices,
the equality conditions,
and its extensions to block matrices (cf.~\cite{Yamada 12},~\cite{ZhangDing09}).
We will make some preparations for this.

First, we recall the theory of the reproducing kernel
due to L.~Schwartz~\cite{Schwartz64}.
To adapt the symbols and the notion of conjugate spaces
to the case of Hilbert spaces,
we denote by $C^*$ the space of continuous antilinear functionals
in a locally convex space $C$ and call it the {\em antidual} of $C$.
If $\calH$ is a Hilbert space, Riesz's representation theorem allows us to identify
$\calH^*$ with $\calH$.
Let the {\em canonical antiduality} $\inner{C, C^*}$ on $C\times C^*$ be
the {\em sesquilinear form}:
\[
	\ol{y(x)}=\inner{x,y}_{C,C^*},\quad x\in C,\ y\in C^*.
\]
Let $C$ be a Hausdorff quasi-complete locally convex space,
and introduce the weak topology $\sig(C^*, C)$ on $C^*$.
If a Hilbert space $\calH$ is a linear subspace of $C$,
and if the inclusion operator $i\colon\calH\incl C$ is continuous,
$\calH$ is called a {\em $C$-RKHS}.
The {\em reproducing kernel} of the $C$-RKHS $\calH$ is the operator $ii^*\in\calL(C^*,C)$
or {\em Schwartz reproducing kernel}, and we call $C$ the {\em ambient space} of $\calH$.
Since $i^*\colon C^*\to\calH$, we have $\ran ii^*\subset\calH$.
The reproducing kernel $k\in\calL(C^*,C)$ of the $C$-RKHS has
the following reproducing property: for any $x\in\calH$ and $c\in C$,
\begin{equation}\label{eq:reprop}
	\inner{x,kc}_\calH=\inner{x,c}_{C,C^*}.
\end{equation}
We call a $C$-RKHS a {\em sub-Hilbert RKHS}
if the ambient space $C$ is a Hilbert space.
In this case, from the above identification, we have
\[
	\inner{x,kc}_\calH=\inner{x,c}_C.
\]
The reproducing property gives $\|kc\|^2=\inner{kc,c}_{C,C^*}$,
thus for every $c\in C^*$,
\begin{equation}\label{def:k pos}
	\inner{kc,c}_{C,C^*}\ge0.
\end{equation}
If an operator $k\in\calL(C^*,C)$ satisfies the above inequality,
then $k$ is called {\em positive definite} and is denoted by $k\gg0$.
A theorem of Schwartz~\cite{Schwartz64}*{Proposition 10} asserts the converse:
for any $k\in\calL^+(C^*,C)$ (i.e., $k\gg0$), there exists a unique $C$-RKHS $\calH$
with the reproducing kernel $k$, which we denote by $\calH_k$.

\begin{defn}
	Let $\calH$ be a Hilbert space and let $V$ be a vector space.
	For a linear map $A\colon\calH\to V$ with closed kernel,
	there exists a unique Hilbert space structure on the subspace
	$\ran A=A(\calH)$ of $V$ such that the linear map
	$A_0\colon x\in \calH\mapsto Ax\in\ran A$ is a coisometry.
	In other words, we define an inner product on $\ran A$ such that,
	for $f,g\in(\ker A)^\perp$,
	\[
		\inner{Af,Ag}_{\calM(A)}=\inner{f,g}_\calH.
	\]
	The space $\ran A$ equipped with this inner product is a Hilbert space
	which is called the {\em operator range} of the map $A$
	and is denoted by $\calM(A)$.
	The norm $\|\cdot\|_{\calM(A)}$ of $\calM(A)$ is called
	the {\em range norm} (cf.~\cite{Sarason94}).
\end{defn}

By definition, we note that if only one of $f$, $g\in\calH$ belongs to
$(\ker A)^\perp$, we have
\[
	\inner{Af,Ag}_{\calM(A)}=\inner{f,g}_\calH.
\]

\begin{defn}[Image of a RKHS]
	If $T\in\calL(C,D)$ is weakly continuous, then
	the operator range $\calM(T|_\calH)$ of $C$-RKHS $\calH$
	is called an {\em image} or an {\em image RKHS} of $\calH$,
	which is denoted by $T_*\calH$.
\end{defn}

The following theorem is the counterpart of the classical theory
of integral transform (see,~\cite{Saitoh88}*{p.~83})
in Schwartz's theory of RKHS.\@

\begin{thm}[\cite{Schwartz64}*{Proposition 21}] \label{th:imageRKHS}
	Let $C$ and $D$ be quasi-complete Hausdorff locally convex spaces,
	and $\calH$ be a $C$-RKHS with the reproducing kernel $k$.
	Let $T\in\calL(C,D)$ be weakly continuous.
	Then, the operator range $T_*\calH$ of the restriction $T|_\calH$ is a
	$D$-RKHS with the reproducing kernel $TkT^*$.
	The norm of an element $x\in T_*\calH$ satisfies the identity:
	\[
		\|x\|_{T_*\calH}=\inf\{\|y\|_\calH\colon Ty=x,\ y\in\calH\}.
	\]
\end{thm}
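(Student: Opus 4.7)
The plan is to realize $T_*\calH$ via the operator-range construction applied to the restriction $S\colon\calH\to D$, $y\mapsto Ty$, and then read off both the reproducing kernel and the norm identity from the associated coisometry. First, since the inclusion $i\colon\calH\incl C$ is continuous and $T\in\calL(C,D)$, the composition $S=Ti$ is continuous, so $\ker S$ is closed; the definition of operator range then yields the Hilbert space $\calM(S)=T_*\calH$ together with a coisometry $S_0\colon\calH\to T_*\calH$, $y\mapsto Sy$, satisfying $S_0 S_0^*=\mathrm{id}_{T_*\calH}$ and $T\circ i=j\circ S_0$, where $j\colon T_*\calH\to D$ denotes the set-theoretic inclusion.

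Second, I would verify that $j$ is continuous, so that $T_*\calH$ is a genuine $D$-RKHS. For any continuous seminorm $p$ on $D$, continuity of $S$ gives $M>0$ with $p(Sy)\le M\|y\|_\calH$ for all $y\in\calH$; for $x\in T_*\calH$, choose $y_0\in(\ker S)^\perp$ with $S_0 y_0=x$ and $\|y_0\|_\calH=\|x\|_{T_*\calH}$, so that $p(jx)=p(Sy_0)\le M\|x\|_{T_*\calH}$. By definition, the reproducing kernel of $T_*\calH$ is $jj^*$; taking adjoints in the identity $T\circ i=j\circ S_0$ and using $S_0 S_0^*=\mathrm{id}_{T_*\calH}$ gives
\[
	jj^*=j(S_0 S_0^*)j^*=(jS_0)(jS_0)^*=(Ti)(Ti)^*=T(ii^*)T^*=TkT^*.
\]

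Finally, the norm identity follows from an orthogonal decomposition of $\calH$: any $y\in\calH$ with $Ty=x$ splits uniquely as $y=y_0+y_1$ with $y_0\in(\ker S)^\perp$ and $y_1\in\ker S$, giving $Sy_0=x$ and $\|y\|_\calH^2=\|y_0\|_\calH^2+\|y_1\|_\calH^2\ge\|x\|_{T_*\calH}^2$, with equality exactly when $y_1=0$. The main obstacle I anticipate is the bookkeeping with antiduals: the adjoints $i^*$, $S_0^*$, $T^*$, $j^*$ live on antiduals of different locally convex spaces, and one must check that weak continuity of $T$ is precisely what guarantees $T^*\in\calL(D^*,C^*)$ for the relevant topologies, so that $TkT^*\in\calL(D^*,D)$ is well-defined and really coincides with $jj^*$. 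Once this is settled, the coisometry $S_0$ transports everything functorially and both conclusions follow.
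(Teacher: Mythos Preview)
The paper does not supply its own proof of this theorem: it is quoted verbatim from Schwartz~\cite{Schwartz64}*{Proposition~21} and used as a black box. So there is nothing in the paper to compare your argument against.

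That said, your argument is correct and is essentially the standard one. You build $T_*\calH$ as the operator range of $S=Ti$, use the factorization $j\circ S_0=T\circ i$ through the coisometry $S_0$, and compute the Schwartz kernel as $jj^*=(jS_0)(jS_0)^*=(Ti)(Ti)^*=TkT^*$; the norm identity then drops out of the orthogonal decomposition $\calH=(\ker S)^\perp\oplus\ker S$, since the unique preimage in $(\ker S)^\perp$ realizes the infimum as a minimum. The only delicate point is exactly the one you flag: the adjoint $T^*$ must land in $C^*$ (with the weak topology $\sig(C^*,C)$) rather than in some larger algebraic dual, and the weak-continuity hypothesis on $T$ is precisely what guarantees this, so that $TkT^*\in\calL(D^*,D)$ is well defined. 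Once that is checked, the chain of adjoints $i^*T^*=S_0^*j^*$ is legitimate and the computation goes through.
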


In general, the image of the reproducing kernel is dense in the RKHS.\@
Schwartz~\cite{Schwartz64}*{Proposition 7 bis} gave the conditions for coincidence
when the ambient space is locally convex.
Here, we describe the related results for the case where the ambient space is a Hilbert space.
There are many equivalent conditions, but we list some them which are related to RKHSs.
\begin{lem}\label{lem:closedran}
	Let $\calH$ and $\calK$ be Hilbert spaces, and let $A\in\calL(H,K)$.
	The following are equivalent:
	\begin{enumerate}
		\item $\ran A$ is closed.
		\item $\ran A^*$ is closed.
		\item $\ran A=\ran AA^*$.
		\item $\ran AA^*$ is closed.
		\item There exists an operator $X\in\calL(K,H)$ with $AXA=A$.
	\end{enumerate}
\end{lem}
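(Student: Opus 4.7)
The key observation is that $\ran AA^* \subseteq \ran A$ always, while $\overline{\ran AA^*} = (\ker AA^*)^\perp = (\ker A^*)^\perp = \overline{\ran A}$, using $\ker AA^* = \ker A^*$. Once this is in place, several implications among (i)--(iv) become essentially automatic, and only (iii) needs an equality of ranges (not merely of closures) to be verified. My plan is to circulate (i) $\Rightarrow$ (iii) $\Rightarrow$ (iv) $\Rightarrow$ (i), then treat (i) $\Leftrightarrow$ (ii) by symmetry, and finally handle (v) separately via an idempotent-range argument.

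For (i) $\Rightarrow$ (iii), I would use the orthogonal decomposition $\calH = \ker A \oplus (\ker A)^\perp$ and restrict $A$ to $(\ker A)^\perp$. Assuming $\ran A$ is closed, the restriction $A_0 \colon (\ker A)^\perp \to \ran A$ is a continuous bijection between Hilbert spaces, hence a topological isomorphism by the open mapping theorem. Taking adjoints and using $\overline{\ran A^*} = (\ker A)^\perp$ shows $A^*$ maps $\ran A$ bijectively onto $(\ker A)^\perp$. Composing, $AA^*$ is a bijection $\ran A \to \ran A$, giving $\ran AA^* = \ran A$, which is (iii). The implication (iii) $\Rightarrow$ (iv) is immediate once (iii) is combined with the already-established identity $\overline{\ran AA^*} = \overline{\ran A}$: indeed (iii) forces $\ran A$ to equal its own closure. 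For (iv) $\Rightarrow$ (i), the closure identity $\overline{\ran AA^*} = \overline{\ran A}$ gives $\ran AA^* = \overline{\ran A}$; combined with $\ran AA^* \subseteq \ran A \subseteq \overline{\ran A}$, this yields (i).

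The equivalence (i) $\Leftrightarrow$ (ii) follows by applying the already-proved implication (i) $\Rightarrow$ (iv) both to $A$ and to $A^*$: starting from $\ran A$ closed, (iii) gives $\ran A = \ran AA^*$, and the same argument applied to $A^*$ in place of $A$ (using (iv) applied to $A^*$) yields that $\ran A^*$ is closed; the converse is symmetric.

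For (i) $\Leftrightarrow$ (v): assuming (i), define $X \in \calL(K,H)$ as the inverse of $A_0 \colon (\ker A)^\perp \to \ran A$ on $\ran A$, extended by $0$ on $(\ran A)^\perp$; boundedness comes from the open mapping theorem, and $AXA = A$ by construction. Conversely, given such $X$, the operator $P := AX$ satisfies $P^2 = AXAX = AX = P$, so $P$ is a bounded idempotent, and $\ran P$ is therefore closed; from $AXA = A$ we get $\ran A \subseteq \ran AX = \ran P \subseteq \ran A$, so $\ran A = \ran P$ is closed, establishing (i).

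I expect the technical heart to lie in (i) $\Rightarrow$ (iii): all other steps are either the closure identity $\overline{\ran A} = \overline{\ran AA^*}$, standard applications of the open mapping theorem, or the elementary fact that the range of a bounded idempotent is closed. The only real point is to upgrade the closure identity to an honest equality of ranges under (i), which is what the Douglas-type factorization through $A_0$ accomplishes.
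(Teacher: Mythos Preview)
Your implication (iii) $\Rightarrow$ (iv) is not justified. You write that combining $\ran A=\ran AA^*$ with the closure identity $\overline{\ran A}=\overline{\ran AA^*}$ ``forces $\ran A$ to equal its own closure,'' but this is a non sequitur: if $X=Y$ then $\overline X=\overline Y$ automatically, so the closure identity adds no new information to (iii). A real argument is required here, and it is essentially the content of the lemma. The paper's route is: from $\ran A\subseteq\ran AA^*$ one gets $\calH=\ran A^*+\ker A$, and comparing with the orthogonal decomposition $\calH=\overline{\ran A^*}\oplus\ker A$ yields $\overline{\ran A^*}\subseteq\ran A^*$, i.e.\ (ii); then (i) follows from the closed range theorem. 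Your subsequent argument for (i) $\Leftrightarrow$ (ii) inherits the same problem, since it relies on ``(iv) applied to $A^*$,'' which you have not established independently of (i).

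The fix is already hidden in your own (i) $\Rightarrow$ (iii) argument: you show there that $A^*$ maps $\ran A$ \emph{onto} $(\ker A)^\perp=\overline{\ran A^*}$, which says precisely that $\ran A^*$ is closed. So you have in fact proved (i) $\Rightarrow$ (ii) directly; by symmetry (ii) $\Rightarrow$ (i), and then (i) together with (iii) gives (iv), while your (iv) $\Rightarrow$ (i) is correct as stated. Reorganize the cycle accordingly. Your treatment of (v) is fine and slightly cleaner than the paper's, which uses a sequential argument for (v) $\Rightarrow$ (i) and cites the Moore--Penrose inverse for (i) $\Rightarrow$ (v); your idempotent observation $P=AX$, $\ran P=\ran A$ is more self-contained.
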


\begin{proof}
	The equivalence of (i) and (ii) is the assertion of the closed range theorem itself.
	From the orthogonal decomposition $H=\ol\ran A^*\oplus\ker A$,
	(ii) implies that $\ran A=A(\ran A^*)=\ran AA^*$,	i.e.,~(iii).
	Conversely, (iii) implies that $H=\ran A^*+\ker A$.
	From the orthogonal decomposition
	we have $\ol\ran A^*\subset\ran A^*$.
	Thus, $\ran A^*$ is closed, i.e.,~(ii).
	We have now proved the equivalence of (i)--(iii).
	Obviously, (i) and (iii) imply (iv).
	
	(iv)$\imp$(iii)
	From $\ran AA^*\subset\ran A=A(\ol\ran A^*)\subset\ol\ran AA^*=\ran AA^*$,
	we obtain $\ran AA^*=\ran A$.

	(v)$\imp$(i)
	Suppose that $Ax_n=y_n\to y$ ($\ninf$).
	Then, $Ax_n=AXy_n\to AXy$, which implies that $y=AXy\in\ran A$.
	Thus, $\ran A$ is closed.

	(i)$\imp$(v)
	The operator $X$ is known as a {\em generalized inverse} of $A$.
	It is well known~(e.g.~\cite{Groetsch77}*{Chapter II}) that if $\ran A$ is closed,
	then the Moore-Penrose inverse $A^+\in\calL(K,H)$ satisfies $AA^+A=A$.
\end{proof}

\begin{lem}\label{lem:cl ran}
	Let $\calH_A$ be a sub-Hilbert $C$-RKHS with the reproducing kernel $A$.
	If $A\in\calL^+(C)$ has a closed range,
	then $\calH_A=\ran A$ as a vector space, and we have, for $\fa x,y\in C$,
	\[
		\inner{Ax,Ay}_{\calH_A}=\inner{Ax,y}_C=\inner{x,Ay}_C.
	\]
\end{lem}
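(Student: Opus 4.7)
The plan is to extract the inner product identities directly from the reproducing property (which requires no closed range hypothesis), and to reduce the vector space equality $\calH_A=\ran A$ to a standard fact about positive operators with closed range combined with the image RKHS theorem.

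First I would observe that $\ran A\subset\calH_A$ always, since $A$ is the reproducing kernel of $\calH_A$. Then for any $x\in C$ and any $f\in\calH_A$, the sub-Hilbert reproducing property gives $\inner{f,Ax}_{\calH_A}=\inner{f,x}_C$. Choosing $f=Ax'$ with $x'\in C$ (which lies in $\calH_A$) and using $A^*=A$ yields, with the variables relabeled,
\[
  \inner{Ax,Ay}_{\calH_A}=\inner{Ax,y}_C=\inner{x,Ay}_C.
\]
So once $\ran A=\calH_A$ is established, the inner product identities are essentially automatic.

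For the vector space identification I would apply Theorem \ref{th:imageRKHS} with $C$ itself viewed trivially as a $C$-RKHS with reproducing kernel $I$, and with the continuous (hence weakly continuous, as everything is Hilbert) map $T=A^{1/2}\in\calL(C)$. Since $T\,I\,T^*=A^{1/2}(A^{1/2})^*=A$, the image $T_*C=\calM(A^{1/2})$ is a $C$-RKHS whose reproducing kernel is $A$, and by the uniqueness part of Schwartz's theorem it must coincide with $\calH_A$. In particular $\calH_A=\ran A^{1/2}$ as a set.

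It then remains to verify $\ran A^{1/2}=\ran A$ under the closed range hypothesis. Using $\ker A^{1/2}=\ker A$ for positive self-adjoint $A$, the chain
\[
  \ran A\subset\ran A^{1/2}\subset\ol{\ran A^{1/2}}=(\ker A^{1/2})^\perp=(\ker A)^\perp=\ol{\ran A}=\ran A
\]
closes up, the last equality using closedness of $\ran A$ (or, equivalently, clauses (i)--(iii) of Lemma \ref{lem:closedran}). The main (and only slightly delicate) step is the image-RKHS identification $\calH_A=\calM(A^{1/2})$; after that, the closed range hypothesis intervenes only through this routine collapse of inclusions.
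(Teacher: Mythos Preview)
Your proof is correct and follows essentially the same route as the paper: identify $\calH_A$ with the operator range $\calM(A^{1/2})$ via Theorem~\ref{th:imageRKHS}, then collapse $\ran A^{1/2}$ to $\ran A$ using the closed range hypothesis, and read off the inner product identities from the reproducing property. The only cosmetic difference is that the paper invokes Lemma~\ref{lem:closedran} (applied to $A^{1/2}$, so that $AA^*$ there becomes $A$) for the equality $\ran A^{1/2}=\ran A$, whereas you spell out the chain of inclusions directly.
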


\begin{proof}
	By Theorem~\ref{th:imageRKHS} the RKHS $\calH_A$ is given by
	the operator range of $A^{1/2}$.
	From Lemma~\ref{lem:closedran} we have $\ran A^{1/2}=\ran A$.
	Thus, $\calH_A=\ran A$ as a vector space.
	Since $A$ is the reproducing kernel of $\calH_A$, the last identity holds.
\end{proof}

If $C$ is a finite-dimensional Hilbert space,
then the range of $A\in\calL^+(C)$ is closed,
hence as a corollary of Lemma~\ref{lem:cl ran}
we have (cf.~\cite{Yamada12}*{Proposition 2.1}),

\begin{prp}\label{prp:a_ij RKHS}
	Let $A=(a_{ij})=(\bm a_1\ \bm a_2\ \dots\ \bm a_n)\in M_n$
	be a positive semidefinite matrix.
	Then, the RKHS $\calH_A$ on the set $\{1,2,\dots,n\}$ is given by
	the subspace $\ran A$	of $\C^n$ spanned by the column vectors
	$\bm a_1,\dots,\bm a_n$ of $A$,	equipped with the inner product:
	\[
		\inner{Ax,Ay}_{\calH_A}=\inner{Ax,y}_{\C^n}=\inner{x,Ay}_{\C^n}.
	\]
	In particular, $\dim\calH_A=\rank A$.
	The $i$-th column $\bm a_i$	of the matrix $A$ is
	the reproducing kernel for the point $i$, and we have
	$\inner{\bm{a}_j,\bm{a}_i}=a_{ij}\ (i,j=1,\dots,n)$, i.e.,
	the matrix $A$ is the Gram matrix $G(\bm a_1,\dots,\bm a_n)$ of
	the reproducing kernels $\{\bm a_i\}$ of the RKHS $\calH_A$.
\end{prp}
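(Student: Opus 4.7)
The plan is to obtain the proposition as an immediate consequence of Lemma~\ref{lem:cl ran} applied to the ambient Hilbert space $C=\C^n$, together with a routine identification of functions on the $n$-point set $\{1,2,\dots,n\}$ with vectors in $\C^n$.

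First, since $\C^n$ is finite-dimensional, every linear subspace is closed, so $A\in\calL^+(\C^n)$ automatically has closed range. Lemma~\ref{lem:cl ran} then gives $\calH_A=\ran A$ as a vector space, endowed with the inner product
\[
	\inner{Ax,Ay}_{\calH_A}=\inner{Ax,y}_{\C^n}=\inner{x,Ay}_{\C^n}.
\]
Because $\ran A$ is exactly the column span of $A$, this settles the description of $\calH_A$ as a subspace of $\C^n$, and $\dim\calH_A=\dim\ran A=\rank A$ follows at once.

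Next, I identify a function $f$ on $\{1,\dots,n\}$ with the vector $f\in\C^n$ whose $i$-th coordinate is $f(i)=\inner{f,e_i}_{\C^n}$. Under this identification, $\bm a_i=Ae_i$ is the $i$-th column of $A$. For any $f=Ax\in\calH_A$, the inner product formula above yields
\[
	\inner{f,\bm a_i}_{\calH_A}=\inner{Ax,Ae_i}_{\calH_A}=\inner{Ax,e_i}_{\C^n}=(Ax)_i=f(i),
\]
which is precisely the reproducing property: $\bm a_i$ is the reproducing kernel at the point $i$. Specializing $f=\bm a_j=Ae_j$ gives $\inner{\bm a_j,\bm a_i}_{\calH_A}=(Ae_j)_i=a_{ij}$, so $A=G(\bm a_1,\dots,\bm a_n)$.

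There is no real obstacle here: once Lemma~\ref{lem:cl ran} is in place, the proposition is just its specialization to the finite-dimensional case, plus the bookkeeping needed to translate between ``RKHS on the discrete set $\{1,\dots,n\}$'' and ``sub-Hilbert $\C^n$-RKHS.'' The only point requiring any care is the observation that closedness of $\ran A$ is automatic in finite dimensions, which removes the hypothesis of Lemma~\ref{lem:cl ran}.
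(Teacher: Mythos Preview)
Your proof is correct and follows exactly the approach indicated in the paper, which presents this proposition as an immediate corollary of Lemma~\ref{lem:cl ran} once one notes that the range of $A$ is automatically closed in the finite-dimensional setting. You have simply spelled out the routine identification and reproducing-kernel bookkeeping that the paper leaves implicit.
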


\begin{lem}\label{lem:sum rkhs}
	Let $A$ and $B$ be positive semidefinite operators in a Hilbert space $C$.
	Then, the following hold:
	\begin{enumerate}
		\item $\calH_{A+B}=\calH_A+\calH_B$ as vector spaces,
				and for any $f\in\calH_A$ and $g\in\calH_B$,
				\begin{equation}\label{eq:PythagIneq}
					\|f+g\|_{\calH_{A+B}}^2\le\|f\|_{\calH_A}^2+\|g\|_{\calH_B}^2.
				\end{equation}
				Equality holds if and only if $\inner{f,h}_{\calH_A}=\inner{g,h}_{\calH_B}$
				for every $h\in\calH_A\cap\calH_B$.
		\item If $C$ is finite dimensional, then $\ran\,(A+B)=\ran A+\ran B$.
				Furthermore, the equality in the inequality~\eqref{eq:PythagIneq} holds
				if and only if there exists a $z\in C$ such that $f=Az$ and $g=Bz$.
	\end{enumerate}
\end{lem}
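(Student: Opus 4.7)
The plan is to realize $\calH_{A+B}$ as an image RKHS of $\calH_A\oplus\calH_B$ under the addition map, and extract both the inequality and the equality condition from the Pythagorean identity for coisometries. I would let $S\colon C\oplus C\to C$ be $S(x,y)=x+y$, so $S^*c=(c,c)$. The block-diagonal operator $A\oplus B\in\calL^+(C\oplus C)$ is a reproducing kernel whose RKHS is the orthogonal direct sum $\calH_A\oplus\calH_B$ (this follows from Theorem~\ref{th:imageRKHS} applied with $T$ the identity, since $(A\oplus B)^{1/2}=A^{1/2}\oplus B^{1/2}$). Because $S(A\oplus B)S^*=A+B$, a second application of Theorem~\ref{th:imageRKHS} gives $S_*(\calH_A\oplus\calH_B)=\calH_{A+B}$, which as a vector space coincides with $\calH_A+\calH_B$, and the norm identity reads
\[
    \|f+g\|^2_{\calH_{A+B}}=\inf\bigl\{\|f'\|^2_{\calH_A}+\|g'\|^2_{\calH_B}:f'+g'=f+g\bigr\},
\]
which immediately yields~\eqref{eq:PythagIneq}.

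For the equality condition in (i), the restriction $T=S|_{\calH_A\oplus\calH_B}\colon\calH_A\oplus\calH_B\to\calH_{A+B}$ is a coisometry, so
\[
    \|(f,g)\|^2_{\calH_A\oplus\calH_B}=\|f+g\|^2_{\calH_{A+B}}+\|P_{\ker T}(f,g)\|^2,
\]
and equality in~\eqref{eq:PythagIneq} is equivalent to $(f,g)\in(\ker T)^\perp$. A direct computation identifies $\ker T=\{(h,-h):h\in\calH_A\cap\calH_B\}$, so $(f,g)\perp\ker T$ rewrites precisely as $\inner{f,h}_{\calH_A}=\inner{g,h}_{\calH_B}$ for every $h\in\calH_A\cap\calH_B$.

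For (ii), in finite dimensions every positive operator has closed range, so Lemma~\ref{lem:cl ran} identifies the three RKHSs $\calH_A$, $\calH_B$, $\calH_{A+B}$ with the subspaces $\ran A$, $\ran B$, $\ran(A+B)$ of $C$; combined with (i) this gives $\ran(A+B)=\ran A+\ran B$. To characterise equality, I would show that for every $z\in C$ the pair $(Az,Bz)$ lies in $(\ker T)^\perp$: if $h=Au=Bv\in\ran A\cap\ran B$, Lemma~\ref{lem:cl ran} yields
\[
    \inner{Az,h}_{\calH_A}=\inner{z,Au}_C=\inner{z,h}_C=\inner{z,Bv}_C=\inner{Bz,h}_{\calH_B}.
\]
Hence $(Az,Bz)$ is the unique minimum-norm preimage of $(A+B)z$ under $T$. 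If equality in~\eqref{eq:PythagIneq} holds for some $(f,g)$, then $(f,g)\in(\ker T)^\perp$ by (i); writing $f+g=(A+B)z$, both $(f,g)$ and $(Az,Bz)$ are minimum-norm preimages of $f+g$, forcing $(f,g)=(Az,Bz)$. The converse is immediate from Lemma~\ref{lem:cl ran}: $\|(A+B)z\|^2_{\calH_{A+B}}=\inner{(A+B)z,z}_C=\|Az\|^2_{\calH_A}+\|Bz\|^2_{\calH_B}$.

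The substantive step is the orthogonality claim $(Az,Bz)\perp\ker T$; everything else is abstract Hilbert-space bookkeeping once the image-RKHS identification is in place. The main obstacle I anticipate is translating carefully between the RKHS inner products on $\calH_A$, $\calH_B$, $\calH_{A+B}$ and the ambient inner product on $C$ via Lemma~\ref{lem:cl ran}, which is the one place where the concrete structure of $A$ and $B$ actually enters the argument.
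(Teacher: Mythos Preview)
Your proof is correct and follows the same operator-range/image-RKHS framework the paper sketches for~(i). The one place where you deviate is the forward implication in~(ii): the paper starts from arbitrary representations $f=Ax$, $g=By$, uses the equality criterion from~(i) to deduce $x-y\in(\ran A\cap\ran B)^\perp$, invokes the finite-dimensional identity $(\ran A\cap\ran B)^\perp=\ker A+\ker B$, and then explicitly constructs $z=x-a=y+b$ from a decomposition $x-y=a+b$ with $a\in\ker A$, $b\in\ker B$. Your route instead writes $f+g=(A+B)z$ and appeals to uniqueness of the minimum-norm preimage in $(\ker T)^\perp$ to force $(f,g)=(Az,Bz)$. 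Your argument is a bit slicker in that it bypasses the orthogonal-complement computation; the paper's argument is more constructive, since it tells you how to build $z$ from any given $x,y$. Both are fine.
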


\begin{proof}
	(i)
	Well-known facts (cf.~\cite{Aronszajn50}*{pp.~352--354}).
	One can also prove this fact by showing that
  the operator range of the linear map $f\oplus g\in\calH_A\oplus\calH_B\mapsto f+g\in\C^E$ is the RKHS with the reproducing kernel $A+B$.

	(ii)
	The first assertion follows from Lemma~\ref{lem:cl ran} and (i).
	Let us show the last equivalence.

	$\Llarrow$
	By Lemma~\ref{lem:cl ran} we have, for every $h\in\ran A\cap\ran B$,
	\[
		\inner{f,h}_{\calH_A}=\inner{Az,h}_{\calH_A}=\inner{z,h}_C
		=\inner{Bz,h}_{\calH_B}=\inner{g,h}_{\calH_B}.
	\]
	Thus, equality holds by (i).

	$\imp$
	Since, by assumption, the ranges of the operators $A$ and $B$ are closed,
	Lemma~\ref{lem:cl ran} implies that $\calH_A=\ran A$ and $\calH_B=\ran B$.
	Thus, there exist $x$ and $y\in C$ with $f=Ax$ and $g=By$.
	If equality occurs, then, by (i), we have for each $h\in\ran A\cap\ran B$,
	\[
		\inner{x,h}_C=\inner{f,h}_{\calH_A}=\inner{g,h}_{\calH_B}
		=\inner{y,h}_C.
	\]
	Thus, $x-y\in(\ran A\cap\ran B)^\perp$.
	Since for arbitrary subspaces $E$ and $F$ of $C$, $(E\cap F)^\perp=E^\perp+F^\perp$,
	we have
	\[
		(\ran A\cap\ran B)^\perp=(\ran A)^\perp+(\ran B)^\perp=\ker A+\ker B.
	\]
	Thus, there exist $a\in\ker A$ and $b\in\ker B$ with $x-y=a+b$,
	which implies that $f=Az$ and $g=Bz$ if we take $z=x-a=y+b\in C$.
\end{proof}

Using the theory of integral transform, we can obtain results for solutions
of the interpolation problems concerning the inner product
of a Hilbert space.

\begin{thm}\label{thm:inner_intp}
	Let $G=(\inner{a_j,a_i})\in M_n$ be the Gram matrix of the sequence $\{a_i\}_{i=1}^n$
	in a Hilbert space $\calH$.
	Given $b=(b_j)\in\C^n$, consider the interpolation problem for $f\in\calH$:
	\begin{equation}\label{eq:interp}
		\inner{f,a_i}=b_i,\quad i=1,\dots,n
	\end{equation}
	Then, the following hold:
	\begin{enumerate}
		\item The RKHS $\calH_G$ consists of the set of vectors $b\in\C^n$ such that
				there exists a solution $f\in\calH$ of~\eqref{eq:interp}.
				$\calH_G=\ran G$ as vector spaces.
		\item The norm of $\calH_G$ is the range norm of the operator
				$f\in\calH\mapsto(\inner{f,a_i})\in\C^n$:
				for any $b\in\calH_G$,
				\[
					\|b\|_{\calH_G}=\inf\{\|f\|\colon\inner{f,a_i}=b_i,\ i=1,\dots,n\}.
				\]
		\item If there exists a solution $f\in\calH$ of~\eqref{eq:interp},
				then there exists a unique solution with the minimum norm.
				The solution $f$ has the minimum norm if and only if $f\in\Span\{a_i\}_{i=1}^n$.
	\end{enumerate}
\end{thm}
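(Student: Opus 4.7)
The plan is to realize the interpolation map as an operator $T\colon\calH\to\C^n$ and deduce everything from Theorem~\ref{th:imageRKHS} together with the finite-dimensionality/closed range considerations of Lemma~\ref{lem:cl ran}. Concretely, I would define
\[
  T f = (\inner{f,a_i})_{i=1}^n,\qquad f\in\calH.
\]
A short computation using the definition of the adjoint shows that $T^*\colon\C^n\to\calH$ is given by $T^*c=\sum_{i=1}^n c_i a_i$, and therefore
\[
  TT^* c = \Bigl(\bigl\langle\sum_j c_j a_j,\,a_i\bigr\rangle\Bigr)_{i=1}^n = Gc,
\]
so $TT^*=G$ as an operator on $\C^n$.

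Applying Theorem~\ref{th:imageRKHS} to $T$ (which is trivially weakly continuous since $\C^n$ is finite-dimensional) yields that $T_*\calH$ is a $\C^n$-RKHS with reproducing kernel $TT^*=G$; by uniqueness of the RKHS with a given reproducing kernel, $T_*\calH=\calH_G$. Since $T_*\calH=\ran T$ as vector spaces and a vector $b\in\C^n$ lies in $\ran T$ precisely when the interpolation problem~\eqref{eq:interp} admits a solution, the first assertion of (i) is immediate. The identity $\calH_G=\ran G$ in (i) then follows from Lemma~\ref{lem:cl ran}, since $G$ is a positive semidefinite operator on the finite-dimensional space $\C^n$ and therefore has closed range.

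Part (ii) is simply the norm identity in Theorem~\ref{th:imageRKHS} applied to $T$: for $b\in\calH_G=\ran T$,
\[
  \|b\|_{\calH_G}=\|b\|_{T_*\calH}=\inf\bigl\{\|f\|\colon Tf=b,\ f\in\calH\bigr\},
\]
which, after unpacking $Tf=b$ componentwise, is the desired formula.

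For (iii), the key observation is $\ker T=\{f\in\calH\colon\inner{f,a_i}=0,\ i=1,\dots,n\}=\Span\{a_i\}_{i=1}^n{}^\perp$, so $(\ker T)^\perp=\Span\{a_i\}_{i=1}^n$ (the span is automatically closed, being finite dimensional). The definition of the range norm shows that the infimum in (ii) is attained uniquely at the component of any solution lying in $(\ker T)^\perp$, namely the orthogonal projection of any solution onto $\Span\{a_i\}_{i=1}^n$; uniqueness follows because two minimum-norm solutions would differ by an element of $\ker T\cap(\ker T)^\perp=\{0\}$. The characterization ``$f$ has minimum norm iff $f\in\Span\{a_i\}$'' drops out immediately.

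The main obstacle, if any, is bookkeeping: verifying the adjoint formula so that $TT^*$ really coincides with the Gram matrix $G$ (with the correct convention on the sesquilinearity), and checking that the finite-dimensional hypothesis delivers closed ranges so Lemma~\ref{lem:cl ran} applies. Once those identifications are in place, the three statements are corollaries of Theorem~\ref{th:imageRKHS}.
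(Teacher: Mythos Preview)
Your proposal is correct and follows essentially the same route as the paper: the paper's ``integral transform'' $f\mapsto\hat f$ is exactly your operator $T$, and both arguments identify $T_*\calH$ with $\calH_G$ via uniqueness of the RKHS and Lemma~\ref{lem:cl ran}, then read off (ii) from the range-norm formula and (iii) from the orthogonal decomposition along $\ker T=\Span\{a_i\}^\perp$. The only point worth making explicit is that when you invoke Theorem~\ref{th:imageRKHS} you are viewing $\calH$ as an $\calH$-RKHS with reproducing kernel the identity, so that $TkT^*=TT^*=G$.
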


\begin{proof}
	(i)
	Let $E$ be the set $\{1,2,\dots,n\}$.
	For $f\in\calH$, define a function $\hat f\colon E\to\C$
	by $\hat f(i)=\inner{f,a_i}$, $i\in E$.
	Then, the space $\hat\calH=\{\hat f\colon f\in\calH\}$ is an RKHS on $E$ with
	the reproducing kernel $G$, which is called	the {\em integral transform}
	of $\calH$ (see,~\cite{Saitoh88}*{p.~83}).
	On the other hand, by Theorem~\ref{lem:cl ran},
	if $G$ is a positive semidefinite matrix,
	then $\ran G$ is an RKHS on $E$ with the reproducing kernel $G$.
	By the uniqueness of the RKHS with the same reproducing kernel,
	we have $\hat\calH=\ran G$.
	Therefore, the set of $(b_i)_{i=1}^n\subset\C^n$ with a solution of the
	interpolation problem~\eqref{eq:interp} coincides with the set $\ran G$
	of the image of the integral transform.

	(ii)
	Since the image RKHS $\ran G$ of the integral transform is the operator range
	of the map $f\in\calH\mapsto\hat f\in\hat\calH$, the norm of $(b_i)\in\ran G$
	coincides with the range norm of $\hat\calH$.

	(iii)
	For a solution $f$ of~\eqref{eq:interp},
	let $f_n$ be the orthogonal projection of $f$ onto $\Span\{a_i\}$.
	Then, $f_n$ is also a solution, and since $f-f_n\in\{a_1,\dots,a_n\}^\perp$, we have
	\[
		\|f\|^2=\|f-f_n\|^2+\|f_n\|^2.
	\]
	Thus, $f_n$ is the unique solution with the minimum norm.
\end{proof}

If the sequence $\{a_i\}$ is linearly independent, then we obtain
a concrete representation of the minimum norm solution using the determinant.

\begin{thm}[cf.~\cite{AkhiezerGlazman93}*{p.~13},~\cite{Yamada12}]\label{thm:minimum_int}
	Let $\{a_i\}_{i=1}^n$ be a linearly independent subset of a Hilbert space $\calH$.
	Then, for any $(b_i)_{i=1}^n\in\C^n$, there exists a unique element
	$f\in\calH$ which satisfies	the interpolation
	problem~\eqref{eq:interp} and minimizes	the norm,
	where $f$ and its norm are given by
	\begin{align} \label{eq:f expr}
		f
		&=-\frac1{|G_n|}
		\begin{vmatrix}
			0      & a_1             & \cdots  & a_n \\
			b_1    & \inner{a_1,a_1} & \cdots  & \inner{a_n,a_1} \\
			\vdots & \vdots          & \ddots  & \vdots \\
			b_n    & \inner{a_1,a_n} & \cdots  & \inner{a_n,a_n}
		\end{vmatrix}, \\
		\|f\|^2
		&=-\frac1{|G_n|}
		\begin{vmatrix} \label{eq:norm_f_n}
			0      & \ol{b}_1        & \cdots  & \ol{b}_n \\
			b_1    & \inner{a_1,a_1} & \cdots  & \inner{a_n,a_1} \\
			\vdots & \vdots          & \ddots  & \vdots \\
			b_n    & \inner{a_1,a_n} & \cdots  & \inner{a_n,a_n}
		\end{vmatrix}.
	\end{align}
\end{thm}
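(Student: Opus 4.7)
The plan is to reduce the infinite-dimensional optimization to a finite-dimensional linear system and recognize the resulting Cramer-type formulas as the claimed determinants.

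By Theorem~\ref{thm:inner_intp}(iii) the minimum-norm solution lies in $\Span\{a_i\}_{i=1}^n$, so I would write $f=\sum_{j=1}^n c_j a_j$ with unknown scalars $c_j$. Substituting into \eqref{eq:interp} turns the interpolation conditions into the linear system $G_n c = b$, where $c=(c_j)$. Since $\{a_i\}$ is linearly independent, $G_n$ is positive definite and hence invertible, so Cramer's rule yields $c_j=|G_n^{(j)}|/|G_n|$, where $G_n^{(j)}$ is $G_n$ with its $j$-th column replaced by $b$. Hence $f=|G_n|^{-1}\sum_j |G_n^{(j)}|\,a_j$.

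Next I would identify this expression with \eqref{eq:f expr} by cofactor expansion along the first row of the $(n+1)\times(n+1)$ determinant. The only nonzero entries in that row are $a_1,\dots,a_n$, and the cofactor of $a_j$ is $(-1)^{j}\,M_{0j}$, where $M_{0j}$ is the $n\times n$ minor whose first column is $b$ and whose remaining columns are the columns of $G_n$ with column $j$ removed. Shuffling the $b$-column from position $1$ to position $j$ contributes a sign $(-1)^{j-1}$, giving $M_{0j}=(-1)^{j-1}|G_n^{(j)}|$. The two signs cancel up to a global minus, so the expansion equals $-\sum_j a_j\,|G_n^{(j)}|$, and the factor $-1/|G_n|$ reproduces $f$.

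For the norm formula, I would use the identity
\[
\|f\|^2=\inner{f,f}=\sum_j c_j\,\ol{\inner{f,a_j}}=\sum_j c_j\,\ol{b_j},
\]
and then repeat the same cofactor expansion verbatim for the determinant in \eqref{eq:norm_f_n}, whose first row is now $(0,\ol{b_1},\dots,\ol{b_n})$ instead of $(0,a_1,\dots,a_n)$. The only step requiring any care is the sign bookkeeping in the cofactor expansion, but this is purely mechanical; there is no analytic obstacle once Theorem~\ref{thm:inner_intp}(iii) has been invoked to put $f$ in $\Span\{a_i\}$.
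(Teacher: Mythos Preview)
The paper does not give a proof of this theorem; it simply states the result with references to \cite{AkhiezerGlazman93}*{p.~13} and \cite{Yamada12}. Your argument is correct and is exactly the classical one behind those citations: reduce to $\Span\{a_i\}$ via Theorem~\ref{thm:inner_intp}(iii), solve $G_nc=b$ by Cramer's rule, and recognize the cofactor expansion along the first row. The sign bookkeeping and the norm computation $\|f\|^2=\sum_j c_j\,\ol{b_j}$ are both handled correctly, so there is nothing to add.
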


To derive determinant inequalities,
we now consider a special interpolation problem~\eqref{eq:interp}
of the form
\begin{equation}\label{eq:b_n}
	b_1=\dots=b_{n-1}=0,\ b_n=1,
\end{equation}
assuming that the set $\{a_i\}\subset\calH$ is linearly independent.
Let $f_n$ be the minimum norm solution for the above data~\eqref{eq:b_n},
and $\lam_n=\|f_n\|$ be its norm.
From Theorem~\ref{thm:minimum_int} and~\eqref{eq:b_n}
we conclude that the sequence $\{f_n\}$ of solutions with minimum norm coincides
up to multiple constants with the sequence $\{F_n\}$ obtained by the Gram-Schmidt
orthogonalization process.
More precisely, $f_n=(|G_{n-1}|/|G_n|)^{1/2}F_n$, $n=1,2,\dots$.
Hence, we have

\begin{cor}\label{cor:minnorm}
	If $\{a_i\}_{i=1}^n\subset\calH$ is linearly independent,
	then $\lam_n=\sqrt{|G_{n-1}|/|G_n|}$,
	where $G_k=G(a_1,\dots,a_k)$, $k=1,\dots,n$, $G_0=1$ are the Gram matrices of $\{a_i\}$.
\end{cor}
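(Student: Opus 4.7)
The plan is to read off $\lam_n=\|f_n\|$ directly from Theorem~\ref{thm:minimum_int}, specialised to the data~\eqref{eq:b_n}. Substituting $b_1=\cdots=b_{n-1}=0$ and $b_n=1$ into the norm formula~\eqref{eq:norm_f_n} yields an $(n+1)\times(n+1)$ determinant whose first row is $(0,0,\ldots,0,1)$ and whose first column is $(0,0,\ldots,0,1)^{\mathsf T}$, while the lower-right $n\times n$ block is the Gram matrix $G_n=G(a_1,\ldots,a_n)$.

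The computation then proceeds in two Laplace expansions. First, expand along the first row: only the entry in position $(1,n+1)$ is nonzero, producing a sign $(-1)^{n+2}$ times an $n\times n$ minor whose first column has a single nonzero entry, namely $1$ in the bottom slot, and whose remaining columns are the first $n-1$ columns of $G_n$. Second, expand this minor along its first column: only the bottom entry contributes, with sign $(-1)^{n+1}$, and the surviving $(n-1)\times(n-1)$ block is exactly $G_{n-1}$. The two sign factors combine with the leading $-1/|G_n|$ of~\eqref{eq:norm_f_n} to give $\|f_n\|^2=|G_{n-1}|/|G_n|$, and taking the square root delivers the claim.

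As the paragraph preceding the corollary already indicates, there is a conceptual shortcut that avoids the determinantal manipulation altogether. By Theorem~\ref{thm:inner_intp}(iii), $f_n$ lies in $\Span\{a_1,\ldots,a_n\}$ and, by~\eqref{eq:b_n}, is orthogonal to $a_1,\ldots,a_{n-1}$ while satisfying $\inner{f_n,a_n}=1$; this characterises $f_n$ up to scaling as the $n$-th Gram-Schmidt vector $F_n$, and the proportionality constant $(|G_{n-1}|/|G_n|)^{1/2}$ is fixed by the normalisation $\inner{f_n,a_n}=1$ via the standard Gram-Schmidt identities.

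The main obstacle is purely combinatorial, namely tracking the cofactor signs in the two expansions; there is no conceptual difficulty, so I would favour the direct route through~\eqref{eq:norm_f_n} as the shortest and most self-contained argument.
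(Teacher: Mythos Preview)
Your proposal is correct, and both routes you describe are valid. Note, however, that the paper's own argument is precisely the ``conceptual shortcut'' you relegate to second place: the sentence immediately preceding the corollary records that $f_n=(|G_{n-1}|/|G_n|)^{1/2}F_n$ with $F_n$ the $n$-th orthonormal Gram--Schmidt vector, and the corollary is then read off from $\|F_n\|=1$. Your preferred direct route via the Laplace expansion of~\eqref{eq:norm_f_n} is a genuine alternative: it is entirely self-contained (no appeal to Gram--Schmidt identities for the ratio $|G_{n-1}|/|G_n|$) and mechanical, at the cost of the sign bookkeeping you mention. The paper's route is shorter once one grants the standard Gram--Schmidt formula $\|a_n-P_{n-1}a_n\|^2=|G_n|/|G_{n-1}|$, but that identity is itself usually proved by essentially the same cofactor expansion you carry out, so the two arguments have comparable depth.
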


\section{Hadamard product of vector-valued RKHSs and inner product interpolation}

Consider a $C$-RKHS $\calH$ whose ambient space $C$ is a direct sum
$\bigoplus_{x\in E}C_x$ of Hilbert spaces $\{C_x\}_{x\in E}$.
An element of $C=\bigoplus_{x\in E}C_x$ is regarded as a
function that takes values in $C_x$ at the point $x\in E$.
The point evaluation $f\in\calH\mapsto f(x)\in C_x$ at $x\in E$ is denoted by
$\ev_x$, and, for each $x\in E$, $\ev_x\in\calL(\calH,C_x)$ is bounded.
The adjoint operator $k_x=\ev_x^*\in\calL(C_x,\calH)$ is called the 
{\em reproducing kernel of $\calH$ for the point $x$}.
The reproducing property of $k_x$ is given by, for $f\in\calH$, $c\in C_x$,
\[
	\inner{f,k_x(c)}_\calH=\inner{f(x),c}_{C_x}.
\]
The operator $K(x,y)=k_x^*k_y\in\calL(C_y,C_x)$ is called the {\em kernel function}
of $C$-RKHS $\calH$.
Between the Schwartz reproducing kernel $K\in\calL^+(C)$ of $C$-RKHS $\calH$,
the reproducing kernel $k_x$ for the point $x$, and the kernel function $K(x,y)$,
the following identities hold:
\[
	\ev_xKi_y=K(x,y),\quad k_y=Ki_y,
\]
where $i_y\colon C_y\incl C$ denotes the canonical injection.
The kernel function $K(x,y)$ of $C$-RKHS $\calH$ is {\em positive semidefinite}:
for every $a_1,\dots,a_n\in E$ and $c_i\in C_i$,
\[
	\sum\inner{K(a_i,a_j)c_j,c_i}_{C_{a_i}}\ge0.
\]
Note that the inner product of the {\em tensor product Hilbert space}
$\bigotimes_{p=1}^mH_i$ of the Hilbert spaces $H_1,\dots,H_m$ satisfies the identity
(see, e.g.~\cite{ReedSimon72}*{p.~49}): for $f_i\in H_i$, $g_i\in H_i$, $p=1,\dots,m$,
\begin{equation} \label{eq:tens inner}
	\inner{f_1\otimes\dots\otimes f_m,\,g_1\otimes\dots\otimes g_m}_{\otimes H_i}
	=\prod_{p=1}^m\inner{f_i,g_i}_{H_i}.
\end{equation}

Hereafter, we fix the natural numbers $m$ and $s$,
and let $E$ be the set $\{1,\dots,s\}$ and $C^p=\bigoplus_{j=1}^sC_j^p$
be the direct sum of the finite-dimensional Hilbert space $C_j^p$.
For simplicity, we identify an element of $C_j^p$ with its image in $C^p$ of
the canonical injection.
Under this identification, $C_j^p$ is a mutually orthogonal subspace of $C^p$,
which leads to the orthogonal decomposition $C^p=\bigoplus_{j=1}^sC_j^p$.
Now we have a canonical isomorphism of the tensor product Hilbert space
$\bigotimes_{p=1}^mC^p$:
\[
	\bigotimes_{p=1}^mC^p
	\cong\bigoplus_{(j_1,\dots,j_m)\in E^m}C_{j_1}^1\otimes\dots\otimes C_{j_m}^m.
\]\
Let $\calH^p\ (p=1,\dots,m)$ be a $C^p$-RKHS on $E$,
and let $k_j^p\in\calL(C^p,\calH^p)$ be the reproducing kernel
of $\calH^p$ for the point $j\in E$.
Since every element of $\calH^p$ is a $C^p$-valued function on $E$,
for $f_p\in\calH^p$, $p=1,\dots,m$,
we can define the simple tensor $f_1\otimes\dots\otimes f_m$ as a vector-valued
function on the Cartesian product $E^m$:
\[
(f_1\otimes\dots\otimes f_m)(j_1,\dots,j_m)
=f_1(j_1)\otimes\dots\otimes f_m(j_m)\in\bigotimes_{p=1}^mC^p.
\]
Thus, the tensor product Hilbert space $\bigotimes_{p=1}^m\calH^p$
is a $\bigotimes_{p=1}^mC^p$-RKHS on the product space $E^m$.
From the identity~\eqref{eq:tens inner}, for any $\otimes_{p=1}^mf_p\in\bigotimes\calH^p$
and $c_{j_p}^p\in C^p$, we have
\begin{align*}
	\inner{\otimes_{p=1}^mf_p,\otimes_{p=1}^mk_{j_p}^p(c_{j_p}^p)}_{\otimes\calH^p}
	&=\prod_{p=1}^m\inner{f_p(j_p), c_{j_p}^p}_{C_j^p}
		=\inner{\otimes_{p=1}^mf_p(j_p),\otimes_{p=1}^mc_{j_p}^p}_{\otimes_{p=1}^mC_{j_p}^p} \\
	&=\inner{(\otimes_{p=1}^mf_p)(j_1,\dots,j_m),
		\otimes_{p=1}^mc_{j_p}^p}_{\otimes_{p=1}^mC_{j_p}^p},
\end{align*}
which shows that $\otimes_{p=1}^mk_{j_p}^p$ is the reproducing kernel of
$\bigotimes_{p=1}^m\calH^p$ for the point $(j_1,\dots,j_m)\in E^m$.
Let
\[
	\phi^*\colon f\in\bigotimes_{p=1}^m\calH^p\mapsto
		\phi^*f=f\circ\phi\in\bigodot_{p=1}^mC^p
\]
be the pullback by the diagonal map $\phi\colon j\in E\mapsto (j,\dots,j)\in E^m$,
where $\bigodot_{p=1}^mC^p$ denotes the direct sum
\[
	\bigoplus_{j\in E}C_j^p\otimes\dots\otimes C_j^m.
\]
Denote the operator range of $\phi^*$ by $\bigodot_{p=1}^m\calH^p$,
which is a $\bigodot_{p=1}^mC^p$-RKHS on $E$,
and is called the {\em Hadamard product RKHS} of $\{\calH^p\}_{p=1}^m$.
For vector-valued functions $f_1\in\calH^1,\dots,f_m\in\calH^m$,
the pullback $\phi^*(f_1\otimes\dots\otimes f_m)$ is denoted by
$\bigodot_{p=1}^mf_i$ or $f_1*\dots*f_m$,
and satisfies $\bigodot_{p=1}^mf_i\in\bigodot_{p=1}^m\calH^p$.
This is called the {\em Hadamard product} of $\{f_p\}_{p=1}^m$.
Since $\bigotimes_{p=1}^mk_j^p\in(\ker\phi^*)^\perp$,
by the reproducing property of the tensor product $\bigotimes_{p=1}^mk_j^p$,
and by definition of the operator range, we see that
$\bigodot_{p=1}^mk_i^p\in\bigodot_{p=1}^m\calH^p$ is the reproducing kernel for the point $i\in E$ of $\bigodot_{p=1}^m\calH^p$.
Furthermore, we immediately have an inequality,
for any $f\in\bigotimes_{p=1}^m\calH^p$,
\begin{equation}\label{eq:rest_ineq}
	\|\phi^*f\|_{\bigodot_{p=1}^m\calH^p}\le\|f\|_{\bigotimes_{p=1}^m\calH^p}.
\end{equation}

\begin{defn}
	An element $f\in\bigotimes_{p=1}^m\calH^p$ is called {\em extremal}
	if $m\ge2$ and if equality holds in the above inequality (\cite{Yamada09}*{p.~378}).
\end{defn}

Note that $f$ is extremal if and only if $f\in(\ker\phi^*)^\perp$.

\begin{prp}[cf.~\cite{Yamada12}] \label{prp:extremal}
	Let $C^p$ be a Hilbert space, and let $\calH^p$ be a $C^p$-RKHS on the set $E$
	($p=1,\dots,m$).
	An element $f\in\bigotimes_{p=1}^m\calH^p$ is extremal if and only if
	$f$ is in the closed linear span of the set
	$\bigcup_{j\in E}\ran k_j^1\otimes\dots\otimes\ran k_j^m$,
	where $k_j^p$ is the reproducing kernel of $\calH^p$ for the point $j\in E$.
\end{prp}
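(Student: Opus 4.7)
The plan is to use the characterization noted immediately before the proposition: $f$ is extremal iff $f \in (\ker\phi^*)^\perp$. So it suffices to show
\[
	(\ker\phi^*)^\perp = \ol{\Span}\bigcup_{j\in E}\ran k_j^1\otimes\dots\otimes\ran k_j^m,
\]
which I would establish by computing $\ker\phi^*$ explicitly via the reproducing property and then taking orthogonal complements.

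First I would describe $\ker\phi^*$ pointwise. By definition of $\phi^*$, an element $f\in\bigotimes_{p=1}^m\calH^p$ lies in $\ker\phi^*$ iff $(f\circ\phi)(j)=f(j,\dots,j)=0$ for every $j\in E$. The excerpt has already identified $\bigotimes_{p=1}^m k_j^p$ as the reproducing kernel of $\bigotimes_{p=1}^m\calH^p$ at the diagonal point $(j,\dots,j)\in E^m$, so the reproducing property gives
\[
	\inner{f,(\otimes_p k_j^p)(c)}_{\bigotimes\calH^p}
	=\inner{f(j,\dots,j),c}_{\bigotimes_p C_j^p},
	\quad c\in\bigotimes_p C_j^p.
\]
Hence $f(j,\dots,j)=0$ iff $f\perp\ran(\otimes_p k_j^p)$ in $\bigotimes\calH^p$.

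Next I would identify this range. Since $(\otimes_p k_j^p)(c_1\otimes\dots\otimes c_m)=k_j^1(c_1)\otimes\dots\otimes k_j^m(c_m)$, the image of $\otimes_p k_j^p$ is precisely the set of simple tensors whose $p$-th factor lies in $\ran k_j^p$; by definition of the Hilbert tensor product, its closed linear span is $\ran k_j^1\otimes\dots\otimes\ran k_j^m$ (viewed as a subspace of $\bigotimes\calH^p$). Therefore
\[
	\ker\phi^*=\bigcap_{j\in E}\bigl(\ran k_j^1\otimes\dots\otimes\ran k_j^m\bigr)^\perp
	=\Bigl(\ol{\Span}\bigcup_{j\in E}\ran k_j^1\otimes\dots\otimes\ran k_j^m\Bigr)^\perp,
\]
and taking orthogonal complements (the right-hand side is a closed subspace) yields the claimed description of $(\ker\phi^*)^\perp$, finishing the proof.

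The main obstacle is really just bookkeeping: one has to confirm that the range of $\otimes_p k_j^p$ (as an operator on the finite-dimensional Hilbert space $\bigotimes_p C_j^p$) coincides with $\ran k_j^1\otimes\dots\otimes\ran k_j^m$ as a subspace of the tensor product Hilbert space $\bigotimes_p\calH^p$, and to invoke the elementary fact $(\bigcap V_j)^\perp = \ol{\Span}\bigcup V_j^\perp$ or, equivalently as used here, $(\ol{\Span}\bigcup W_j)^\perp = \bigcap W_j^\perp$. Once those identifications are made, the argument is a straight application of the reproducing identity.
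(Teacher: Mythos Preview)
Your proposal is correct and follows essentially the same approach as the paper: both compute $\ker\phi^*$ via the reproducing property of $\bigotimes_p k_j^p$ at the diagonal points and then take orthogonal complements to obtain the closed linear span. The paper's proof is simply a terser version of exactly what you wrote (one minor quibble: the parenthetical ``finite-dimensional'' is not assumed in the proposition and is not needed, since orthogonality to $\ran(\otimes_p k_j^p)$ is the same as orthogonality to its closure).
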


\begin{proof}
	By the reproducing property of $\bigotimes_{p=1}^mk_j^p$ for the
	point $(j,j,\dots,j)\in E^m$, we have
	$\ker\phi^*=(\bigcup_{j\in E}\bigotimes_{p=1}^m\ran k_j^p)^\perp$.
	Thus, its orthogonal complement is given by
	$(\ker\phi^*)^\perp=\bigvee_{j\in E}\bigotimes_{p=1}^m\ran k_j^p$,
	where $\bigvee$ denotes a closed linear span.
\end{proof}

In particular, if the ambient space is finite dimensional,
then we have a simple characterization of extremal simple tensors.

\begin{prp}\label{prp:tensCiDi}
	Let $m\ge2$.
	Suppose that $C^p=\bigoplus_{i=1}^sC_i^p$ is a direct sum of finite dimensional
	Hilbert spaces, and suppose that each $A^p\in\calL(C^p)$ is positive definite
	($A^p>0$) for every $p=1,\dots,m$.
	If $f_p\in\calH^p\sm\{0\}$ for $p=1,\dots,m$, then the simple tensor
	$f_1\otimes\dots\otimes f_m$ is extremal if and only if
	there exists a point $i\in E$ such that
	$f_p\in\ran k_i^p$ for every $p=1,\dots,m$.
\end{prp}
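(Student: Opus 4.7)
The plan is to translate extremality, via Proposition~\ref{prp:extremal}, into a statement about a suitable direct-sum decomposition of $\bigotimes_{p=1}^m\calH^p$, and then exploit the fact that a simple tensor vanishes iff one of its factors vanishes. The easy direction is immediate: if there is a common $i\in E$ with $f_p\in\ran k_i^p$ for every $p$, then $f_1\otimes\dots\otimes f_m\in\bigotimes_{p=1}^m\ran k_i^p\subset(\ker\phi^*)^\perp$, so the simple tensor is extremal.

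For the nontrivial direction, I would first exploit the assumption $A^p>0$. In finite dimensions $A^p$ is invertible, so Lemma~\ref{lem:cl ran} gives $\calH^p=\ran A^p=C^p$ as vector spaces and $\ran k_i^p=A^p(C_i^p)$. Since $A^p$ is injective and the $C_i^p$ are linearly independent subspaces whose sum is $C^p$, we obtain the algebraic direct sum $\calH^p=\bigoplus_{i=1}^s\ran k_i^p$. Distributing the (finite) tensor product over these direct sums then yields
\[
	\bigotimes_{p=1}^m\calH^p=\bigoplus_{(i_1,\dots,i_m)\in E^m}\bigotimes_{p=1}^m\ran k_{i_p}^p,
\]
and the ``diagonal'' subspaces $\bigotimes_{p=1}^m\ran k_j^p$ ($j\in E$) are precisely the summands indexed by constant tuples. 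Consequently, in this decomposition, $(\ker\phi^*)^\perp=\bigvee_{j\in E}\bigotimes_{p=1}^m\ran k_j^p$ coincides with the direct sum of the diagonal summands.

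Write $f_p=\sum_{i=1}^s f_p^i$ with $f_p^i\in\ran k_i^p$, and expand
\[
	f_1\otimes\dots\otimes f_m=\sum_{(i_1,\dots,i_m)\in E^m}f_1^{i_1}\otimes\dots\otimes f_m^{i_m};
\]
this is the unique expression in the above direct-sum decomposition. Extremality thus forces every off-diagonal summand to vanish. Using that a simple tensor vanishes iff some factor does, put $S_p:=\{i\in E\colon f_p^i\ne0\}$; each $S_p$ is nonempty because $f_p\ne0$, and the vanishing condition states that $S_1\times\dots\times S_m$ contains only constant tuples. Since $m\ge2$, this forces every $S_p$ to be a singleton, all equal to a common element $j\in E$, so that $f_p=f_p^j\in\ran k_j^p$ for every $p$. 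The step that requires real care is the algebraic direct-sum decomposition of $\bigotimes_{p=1}^m\calH^p$ together with the identification of $(\ker\phi^*)^\perp$ with the sum of its diagonal summands; everything else unwinds directly from the definitions and the vanishing criterion for simple tensors.
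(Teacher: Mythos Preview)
Your proof is correct and follows essentially the same route as the paper: both establish the algebraic direct sum $\calH^p=\bigoplus_{i=1}^s\ran k_i^p$ from injectivity of $A^p$, expand $f_1\otimes\cdots\otimes f_m$ accordingly, and use Proposition~\ref{prp:extremal} to force the support sets $S_p$ (the paper's $E_p$) to be a common singleton. Your presentation makes the tensor-product direct sum and the singleton argument a bit more explicit, but there is no substantive difference.
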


\begin{proof}
	For each $p$, the reproducing kernel $k_i^p$ for the point $i\in E$
	is the $i$-th column of the operator matrix $A^p=(A_{ij}^p)_{i,j=1}^s$,
	that is, $A^p=(k_1^p\ \dots\ k_s^p)$.
	Since $A^p$ is positive definite, $A^p$ is an injection.
	Thus, if $A^pc=\sum_{i=1}^sk_i^pc_i=0$ for $c=(c_i)\in C^p$,
	then $c_i=0$ for each $i$.
	Since $C^p$ is finite dimensional, by Lemma~\ref{lem:cl ran}
	$\calH^p$ and $\ran A^p$ are identical as vector spaces.
	Hence, we conclude that $\calH^p$ is a direct sum $\bigoplus_{i=1}^s\ran k_i^p$.

	$\imp$
	Since for every $p$ $\calH^p$ is a direct sum $\bigoplus_{i=1}^s\ran k_i^p$
	and since $f_p\ne0$,
	there exists a subset $E_p$ of $E$ and an element $c_i^p\in C_i^p$
	such that $f_p$ is a linearly independent sum of the form
	\[
		f_p=\sum_{i\in E_p}k_i^p(c_i^p).
	\]
	Thus,	their tensor product is the sum of linearly independent terms:
	\[
		f_1\otimes\dots\otimes f_m
		=\sum_{i_1\in E_1,\dots,i_m\in E_m}
			k_{i_1}^1(c_{i_1}^1)\otimes\dots\otimes k_{i_m}^m(c_{i_m}^m).
	\]
	Since $C^p$ is finite dimensional, from Proposition~\ref{prp:extremal},
	$f_1\otimes\dots\otimes f_m$ must be a linear combination of elements
	in $\ran k_i^1\otimes\dots\otimes\ran k_i^m$, $i\in E$.
	This is only possible if there exists an $i\in E$ such that the set $E_p$
	is the singleton $\{i\}$ for every $p=1,\dots,m$.

	$\Llarrow$
	This is clear from Proposition~\ref{prp:extremal}.
\end{proof}

In this paper, the inner product interpolation in a sub-Hilbert RKHS played
an important role.
We will describe the setting and state some lemmas concerning the interpolation problem.
Let $C$ be a Hilbert space.
Given a $C$-RKHS $\calH_A$ and a CONS $\{u_j\}_{j\in J}$ of $C$,
we consider the problem of finding solutions $f\in\calH_A$ that satisfy, for $j\in J$,
\begin{equation}\label{eq:inner ip}
	\inner{f,Au_i}_{\calH_A}=0\ \text{for all }i<j,\ \text{and }\inner{f,Au_j}_{\calH_A}=1,
\end{equation}
which is called the {\em inner product interpolation problem (IPIP)} of order $j$
with respect to a CONS $\{u_j\}$ of the $C$-RKHS $\calH_A$.
For simplicity, we denote this IPIP of order $j$ by triple $(\calH_A,\{u_j\},j)$.
The set of solutions to this problem is denoted by $P_j^A$.
We require that the index set $J$ to be linearly ordered.
If $P_j^A\ne\es$, we denote the element of $P_j^A$ with the minimum norm by $f_j^A$
and its norm by $\lam_j^A$.
If $C=\bigoplus_{i=1}^sC_i$ is a direct sum of Hilbert spaces,
we denote the CONS of $C_i$ with a double suffix as
$\{u_{ij}\colon\ j=1,\dots,n_i\}$.
Thus, $\{u_{ij}\colon\ i=1,\dots,s;\ j=1,\dots,n_i\}$ is a CONS of $C$.
Here, we introduce the lexicographic order for the suffix $ij$ of $u_{ij}$.
From the definition of $f_{ij}^A$ and the reproducing property of
$k_{ij}^A$, we obtain
\[
	f_{ij}^A(l)=0,\quad l=1,\dots,i-1,\ j=1,\dots,n_i.
\]
This is important when considering equality conditions for inequalities.

\begin{lem} \label{lem:main_ineq}
	Let $C=\bigoplus_{i=1}^sC_i$ be the direct sum of finite-dimensional
	Hilbert spaces $C_i$,
	and let the operator $A\in\calL(C)$ be expressed as an operator matrix
	$A=(A_{ij})_{i,j=1}^s$, $A_{ij}\in\calL(C_j,C_i)$.
	If $A$ is positive definite, then the IPIP $(\calH_A,\{u_{ij}\},ij)$
	has a solution, and the following inequality holds:
	\begin{equation}\label{eq:norm lbd}
		\lam_{ij}^A\ge1/\inner{A_{ii}u_{ij},u_{ij}}_{C_i}^{1/2}.
	\end{equation}
	Equality holds if and only if $f_{ij}^A$ is a constant multiple of $k_i^A(u_{ij})$,
	which in turn is equivalent to the condition that
	$k_{i'}^A(u_{i'j'})\perp k_i^A(u_{ij})$ in $\calH_A$ for every $i'j'<ij$.
	In particular, when equality occurs, the $C_1,\dots,C_{i-1}$-components of
	the vector $Au_{ij}\in C$ are equal to 0.
\end{lem}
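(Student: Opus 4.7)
The plan is to translate the inner-product interpolation problem into a conventional interpolation problem in $\calH_A$, then apply Cauchy--Schwarz for the norm bound, and finally trace equality carefully.

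First I would use Lemma~\ref{lem:cl ran} (applicable since $A>0$ on a finite-dimensional $C$ has closed range) to identify $\calH_A=\ran A$ with inner product $\inner{Ax,Ay}_{\calH_A}=\inner{Ax,y}_C=\inner{x,Ay}_C$. Under the canonical injection $i_i\colon C_i\incl C$, the reproducing kernel $k_i^A$ for the point $i\in E$ satisfies $k_i^A(c)=Ai_i(c)$, so in particular $k_i^A(u_{ij})=Au_{ij}$. A short computation then gives $\|k_i^A(u_{ij})\|_{\calH_A}^2=\inner{Au_{ij},u_{ij}}_C=\inner{A_{ii}u_{ij},u_{ij}}_{C_i}$. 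Rewriting the IPIP conditions via the reproducing property, $\inner{f,Au_{i'j'}}_{\calH_A}=\inner{f(i'),u_{i'j'}}_{C_{i'}}$, exposes their concrete content: $f(i')=0$ for $i'<i$, and $f(i)\perp u_{ij'}$ for $j'<j$ while $\inner{f(i),u_{ij}}_{C_i}=1$.

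Next, for existence of a solution I would apply Theorem~\ref{thm:inner_intp} to the finite family $\{Au_{i'j'}\colon i'j'\le ij\}$ in $\calH_A$; these vectors are linearly independent because $A$ is injective, hence their Gram matrix is positive definite, so $\calH_G=\C^{ij}$ contains the specified data vector $(0,\dots,0,1)$, guaranteeing a solution and a unique minimum-norm one $f_{ij}^A$. For the inequality~\eqref{eq:norm lbd}, apply Cauchy--Schwarz: for any $f\in P_{ij}^A$,
\[
1=\inner{f,k_i^A(u_{ij})}_{\calH_A}\le\|f\|_{\calH_A}\,\|k_i^A(u_{ij})\|_{\calH_A}=\|f\|_{\calH_A}\inner{A_{ii}u_{ij},u_{ij}}_{C_i}^{1/2},
\]
whence $\lam_{ij}^A\ge1/\inner{A_{ii}u_{ij},u_{ij}}_{C_i}^{1/2}$.

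For the equality analysis I would proceed in a loop. Equality in the above Cauchy--Schwarz step forces $f_{ij}^A=c\,k_i^A(u_{ij})$ for a scalar $c$, and imposing $\inner{f_{ij}^A,k_i^A(u_{ij})}_{\calH_A}=1$ fixes $c=1/\inner{A_{ii}u_{ij},u_{ij}}_{C_i}\ne0$. The remaining IPIP conditions $\inner{f_{ij}^A,k_{i'}^A(u_{i'j'})}_{\calH_A}=0$ for $i'j'<ij$ then become exactly $\inner{k_i^A(u_{ij}),k_{i'}^A(u_{i'j'})}_{\calH_A}=0$ for $i'j'<ij$. Conversely, if these orthogonality relations hold, the vector $c\,k_i^A(u_{ij})$ with the same $c$ solves the IPIP and attains the lower bound, so by uniqueness of the minimum-norm solution it equals $f_{ij}^A$. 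The supplementary assertion about the components of $Au_{ij}$ is immediate: for $i'<i$ the orthogonality gives $\inner{Au_{ij},u_{i'j'}}_C=0$ for every $j'$, and since $\{u_{i'j'}\}_{j'}$ is a CONS of $C_{i'}$, the $C_{i'}$-component of $Au_{ij}$ vanishes.

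The only place that needs a little care is the bookkeeping in the equality loop---specifically, checking that the candidate $c\,k_i^A(u_{ij})$ is genuinely admissible (i.e.\ lies in $P_{ij}^A$) so that one can invoke Theorem~\ref{thm:inner_intp}(iii) to conclude it is \emph{the} minimum-norm solution. Everything else is essentially unpacking the reproducing property and Cauchy--Schwarz.
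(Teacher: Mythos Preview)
Your proposal is correct and follows essentially the same route as the paper's proof: existence via Theorem~\ref{thm:inner_intp} from linear independence of $\{Au_{i'j'}\}$, the bound via Cauchy--Schwarz against $k_i^A(u_{ij})$ with $\|k_i^A(u_{ij})\|_{\calH_A}^2=\inner{A_{ii}u_{ij},u_{ij}}_{C_i}$, and the equality loop exactly as you describe. Your treatment is in fact a bit more explicit than the paper's, particularly in the converse direction and in deriving the vanishing of the $C_{i'}$-components directly rather than by referring back to the remark preceding the lemma.
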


\begin{proof}
	Since $C$ is finite dimensional, $A>0$ implies that $A$ is an injection.
	Hence, the sequence $\{Au_{ij}\}$ is linearly independent,
	which implies that the Gram matrix $G$ is nonsingular.
	Therefore, $G$ is a surjection, which implies that $P_{ij}^A\ne\es$
	by Theorem~\ref{thm:inner_intp}.
	Since $Au_{ij}=k_i^A(u_{ij})$, if $f\in P_{ij}^A$, then by Schwarz's inequality,
	\[
		1=\inner{f,k_i^A(u_{ij})}_{\calH_A}\le\|f\|_{\calH_A}\|k_i^A(u_{ij})\|_{\calH_A}.
	\]
	From $u_{ij}\ne0$, we conclude that
	\[
		\|k_i^A(u_{ij})\|^2_{\calH_A}=\inner{(k_i^A)^*k_i^A(u_{ij}),u_{ij}}_{C_i}
		=\inner{A_{ii}u_{ij},u_{ij}}_{C_i}>0.
	\]
	Thus,
	\[
		\|f\|_{\calH_A}\ge1/\inner{A_{ii}u_{ij},u_{ij}}_{C_{i}}^{1/2}.
	\]
	Taking the minimum of the left-hand side, we obtain the inequality~\eqref{eq:norm lbd}.
	From the equality condition of Schwarz's inequality, $f_{ij}^A$ and $k_i^A(u_{ij})$
	are linearly dependent, thus we have
	\[
		f_{ij}^A=\frac{k_i^A(u_{ij})}{\inner{A_{ii}u_{ij},u_{ij}}_{C_{i}}}.
	\]
	By definition of $f_{ij}^A$, if $i'j'<ij$, then it is clear that
	$k_{i'}^A(u_{i'j'})\perp k_i^A(u_{ij})$.
	Conversely, if this is the case, then $f_{ij}^A$ is a constant multiple of
	$k_{ij}^A(u_{ij})$, which implies equality.
	The last statement follows from the note immediately above this Lemma.
\end{proof}

If $\{u_j\}_{j=1}^n$ is a CONS of the $n$-dimensional Hilbert space $C$,
then, by Parseval's theorem, the operator $\phi\colon C\to\C^n$
defined by $\phi(f)=(\inner{f,u_j})_{j=1}^n$ is an onto isometry.
We call $\phi$ the isometry {\em induced by} the CONS $\{u_j\}_{j=1}^n$.
From a finite-dimensional sub-Hilbert RKHS $\calH$,
we can construct a classical RKHS due to Aronszajn~\cite{Aronszajn50}
which is isometrically isomorphic to $\calH$.

\begin{lem}\label{lem:scalarztn}
	Let $\phi\colon C\to\C^n$ be the isometry induced by
	a CONS $\{u_j\}_{j=1}^n$, and let $A\in M_n$ be the matrix
	representing the operator $T\in\calL^+(C)$ with respect to the basis $\{u_j\}_{j=1}^n$.
	Then, the matrix $A=\phi T\phi^{-1}$ is positive semidefinite,
	$\calH_T=\ran T$ and $\calH_A=\ran A$ as vector spaces, and we have
	for any $x,y\in\calH_T$,
	\[
		\hspace{4cm}
		\inner{x,y}_{\calH_T}=\inner{\phi x,\phi y}_{\calH_A}.
		\hspace{2cm}
		\begin{tikzcd}
			C\ar[r,"\phi"] & \C^n \\
			\calH_T\ar[r,dashed,"\phi|_{\calH_T}"]\ar[u,hook] & \calH_A\ar[u,hook]
		\end{tikzcd}.
	\]
	Thus, $\phi|_{\calH_T}\colon\calH_T\to\calH_A$ is an onto isometry.
\end{lem}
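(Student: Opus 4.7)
The plan is to verify the four claims in sequence, all reducing to the observation that $\phi$ is a unitary isomorphism of $C$ onto $\C^n$ that intertwines $T$ and $A$ via $\phi T=A\phi$. Finite dimensionality will let me invoke Lemma~\ref{lem:cl ran} to realise both RKHSs as the ranges of their reproducing kernels, after which the statement becomes a transport-of-structure exercise.

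First I would record that, since $\{u_j\}$ is a CONS, Parseval's theorem guarantees that $\phi$ is an onto isometry and in particular unitary, so $\phi^{-1}=\phi^{*}$. Writing $A=\phi T\phi^{*}$ immediately gives $A\gg 0$, because for any $\xi\in\C^n$ one has
\[
	\inner{A\xi,\xi}_{\C^n}=\inner{T\phi^{*}\xi,\phi^{*}\xi}_C\ge 0.
\]
(Equivalently, $A$ is the Gram matrix $(\inner{Tu_j,u_i}_C)$ of a positive operator.)

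Second, since $C$ and $\C^n$ are finite dimensional, both $T\in\calL^{+}(C)$ and $A\in\calL^{+}(\C^n)$ have closed range, so Lemma~\ref{lem:cl ran} identifies $\calH_T=\ran T$ and $\calH_A=\ran A$ as vector spaces, with inner products characterised by
\[
	\inner{Tx,Ty}_{\calH_T}=\inner{Tx,y}_C,\qquad
	\inner{A\xi,A\eta}_{\calH_A}=\inner{A\xi,\eta}_{\C^n}.
\]
The intertwining identity $\phi T=A\phi$ then yields $\phi(\ran T)=\ran A$, so $\phi|_{\calH_T}$ is already a bijection of $\calH_T$ onto $\calH_A$ at the vector-space level.

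Finally, to verify the isometry claim I would take arbitrary $x=Tu$ and $y=Tv$ in $\calH_T$, so that $\phi x=A\phi u$ and $\phi y=A\phi v$, and combine the characterisations above with the fact that $\phi$ preserves inner products on $C$:
\[
	\inner{\phi x,\phi y}_{\calH_A}=\inner{A\phi u,\phi v}_{\C^n}
	=\inner{\phi Tu,\phi v}_{\C^n}=\inner{Tu,v}_C=\inner{x,y}_{\calH_T}.
\]
I do not anticipate any genuine obstacle: all the work is done by Lemma~\ref{lem:cl ran} together with the unitarity of $\phi$, and the commutative diagram in the statement is just a visual summary of $\phi T=A\phi$.
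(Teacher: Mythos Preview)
Your proof is correct and follows essentially the same path as the paper's: both rely on the unitarity of $\phi$ (so $\phi^{-1}=\phi^{*}$), invoke Lemma~\ref{lem:cl ran} in finite dimensions to realise $\calH_T=\ran T$ and $\calH_A=\ran A$ with the stated inner-product formulas, and then run the same one-line computation to check $\inner{x,y}_{\calH_T}=\inner{\phi x,\phi y}_{\calH_A}$. The only cosmetic difference is that the paper phrases the identification $\phi(\calH_T)=\calH_A$ via the image-RKHS Theorem~\ref{th:imageRKHS} (the reproducing kernel of $\phi_*\calH_T$ is $\phi T\phi^{*}=A$), whereas you obtain it directly from the intertwining $\phi T=A\phi$; this is not a substantive divergence.
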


\begin{proof}
	Since $\phi$ is an isometric isomorphism, we have $\phi^*=\phi^{-1}$.
	By Theorem~\ref{th:imageRKHS}, this implies that the reproducing kernel
	of the image RKHS $\phi_*(\calH_T)$	is the matrix $A$.
	Since $\phi_*(\calH_T)$ is a $C^n$-RKHS, we see that $\phi_*(\calH_T)=\calH_A$.
	By Lemma~\ref{lem:cl ran} $\calH_T=\ran T$ and $\calH_A=\ran A$ as vector spaces,
	since $C$ and $\C^n$ are both finite dimensional.
	Furthermore, for $x=Tz,\ y=Tw$, $z,w\in C$,
	\begin{align*}
		\inner{x,y}_{\calH_T}
		&=\inner{x,w}_C=\inner{\phi x,\phi w}_{\C^n}=\inner{\phi x,A\phi w}_{\calH_A}
			=\inner{\phi x,\phi y}_{\calH_A}.
	\end{align*}
\end{proof}

\begin{defn}
	The RKHS $\calH_A$ on $\{1,\dots,n\}$ constructed above is called the
	{\em scalarization} of the $C$-RKHS $\calH_T$.
\end{defn}

From Lemma~\ref{lem:scalarztn}, we can express the determinant of the reproducing kernel
$T$ of the sub-Hilbert RKHS using minimum norms $\{\lam_i^T\}$.

\begin{lem}\label{lem:prod det}
	Let $\{u_j\}_{j=1}^n$ be a CONS of finite-dimensional Hilbert space $C$,
	and let $T\in\calL(C)$ be	positive definite.
	Then,
	\begin{equation}\label{eq:lam det}
		\prod_{i=1}^n\lam_i^T=|T|^{-1/2}.
	\end{equation}
	Furthermore, if $C$ is the direct sum $C=\bigoplus_{i=1}^sC_i$, and if
	$\{c_{ij}\}_{j=1}^{n_i}$ is a CONS of $C_i$, then, for $i=1,\dots,s$,
	\[
		\prod_{j=1}^{n_i}\lam_{ij}^T=\frac{|T_{i-1}|^{1/2}}{|T_i|^{1/2}},
		\quad(|T_0|=1),
	\]
	where $T_i=(T_{jk})_{j,k=1}^i$ is the $i$-th leading principal submatrix
	of the operator matrix $T=(T_{jk})_{j,k=1}^s$.
\end{lem}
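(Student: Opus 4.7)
The plan is to use the scalarization Lemma~\ref{lem:scalarztn} to reduce the identity to a classical Gram-determinant calculation on the matrix RKHS $\calH_A$, and then telescope using Corollary~\ref{cor:minnorm}.

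First I would pass to $\calH_A$, where $A=\phi T\phi^{-1}$ is the matrix of $T$ in the CONS $\{u_j\}$. Since $\phi|_{\calH_T}\colon\calH_T\to\calH_A$ is an onto isometry sending $Tu_j\in\calH_T$ to $Ae_j=\bm a_j\in\calH_A$, the IPIP of order $j$ in $\calH_T$ corresponds to the interpolation problem $\inner{g,\bm a_i}_{\calH_A}=\delta_{ij}$ ($i\le j$) in $\calH_A$; in particular $\lam_j^T=\lam_j^A$. Because $A$ is a similarity transform of $T$, $|A|=|T|$, so it suffices to prove $\prod_{j=1}^n\lam_j^A=|A|^{-1/2}$.

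Next, $A>0$ implies that its columns $\bm a_1,\dots,\bm a_n$ are linearly independent, and by Proposition~\ref{prp:a_ij RKHS} the Gram matrix of any initial segment $\bm a_1,\dots,\bm a_k$ in $\calH_A$ is precisely the leading principal submatrix $A_k$ of $A$. Applying Corollary~\ref{cor:minnorm} to the linearly independent family $\{\bm a_1,\dots,\bm a_j\}$ in $\calH_A$ with target data $(0,\dots,0,1)$ gives $\lam_j^A=\sqrt{|A_{j-1}|/|A_j|}$, with $|A_0|=1$. The product over $j=1,\dots,n$ telescopes immediately to $|A_n|^{-1/2}=|T|^{-1/2}$, proving the first identity. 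The block version follows in the same way: the lexicographic order on $\{c_{ij}\}$ realizes the leading principal submatrix of $A$ of size $n_1+\dots+n_i$ as the operator matrix $T_i$, so telescoping the formula for $\lam_{ij}^T$ at fixed $i$ over $j=1,\dots,n_i$ collapses to $\sqrt{|T_{i-1}|/|T_i|}$.

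The main obstacle to watch for is justifying that Corollary~\ref{cor:minnorm} applies to the IPIP of order $j$, which imposes only $j$ inner-product constraints rather than the full $n$ used in the corollary's statement. I would resolve this by invoking the corollary intrinsically, with its parameter set to our $j$ and the ambient Hilbert space taken to be $\calH_A$, using the linearly independent subfamily $\{\bm a_1,\dots,\bm a_j\}$ alone; the presence of the extra vectors $\bm a_{j+1},\dots,\bm a_n$ in $\calH_A$ is immaterial, as no constraint is imposed on them in the interpolation problem.
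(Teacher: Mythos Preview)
Your proposal is correct and follows essentially the same approach as the paper: scalarize via Lemma~\ref{lem:scalarztn}, identify the Gram matrix of the columns $\bm a_j$ in $\calH_A$ with the leading principal minors of $A$, and telescope using Corollary~\ref{cor:minnorm}. For the block statement the paper takes a slightly different bookkeeping route---it introduces the image RKHS $(\pi_i)_*(\calH_T)$ to show $\lam_{lj}^T=\lam_{lj}^{T_i}$ for $l\le i$ and then applies the first identity to $T_i$ and $T_{i-1}$ separately---whereas you stay in the scalarized picture and directly identify the leading $(n_1+\dots+n_i)\times(n_1+\dots+n_i)$ principal submatrix of $A$ with the matrix of $T_i$; both arguments yield the same telescoping and are equivalent.
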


\begin{proof}
	By Lemma~\ref{lem:scalarztn}, let $\phi\colon C\to\C^n$ be the isometry
	induced by a CONS $\{u_j\}$, and let $\calH_A$ be the scalarization of $\calH_T$.
	Then, the representation matrix $A\in M_n$ of $T$ with respect to the basis
	$\{u_j\}$ is positive definite, and $\calH_T$ is isometrically isomorphic with
	$\calH_A$.
	If $\{e_j\}_{j=1}^n$ is the canonical basis of $\C^n$,
	then $\phi(u_j)=e_j$, for each $j=1,\dots,n$.
	Thus, the inner product interpolation problem of $\calH_T$ with respect to
	$\{u_j\}\subset C$ is reduced to $\calH_A$ with respect to $\{e_j\}\subset\C^n$.
	Since $Ae_j$ is the reproducing kernel of $\calH_A$ for the point
	$j\in\{1,\dots,n\}$ by Proposition~\ref{prp:a_ij RKHS},
	we have, from Corollary~\ref{cor:minnorm},
	\[
		|A|=G_n(A)=\frac{G_n(A)}{G_{n-1}(A)}\cdot\frac{G_{n-1}(A)}{G_{n-2}(A)}\cdots G_1(A)
		=(\lam_n^A\cdots\lam_1^A)^{-2}.
	\]
	Thus,~\eqref{eq:lam det} holds by isometry, $\phi|_{\calH_T}$.

	If $C$ is a direct sum, let $\iota_i\colon C^{(i)}\to C$ be the canonical injection,
	and let $\pi_i\colon C\to C^{(i)}$ be the projection with $C^{(i)}=\bigoplus_{j=1}^iC_j$.
	Then, it is easy to see that $T_i\in\calL(C^{(i)})$ is the reproducing kernel
	of the image RKHS $(\pi_i)_*(\calH_T)$,
	and, by the reproducing property, for each $f\in\calH_T$ and $c_{lj}\in C_l$ ($l\le i$),
	\begin{align*}
		\inner{f,k_l^T(c_{lj})}_{\calH_T}
		&=\inner{f,T(\iota_ic_{lj})}_{\calH_T}=\inner{f,\iota_ic_{lj}}_C \\
		&=\inner{\pi_if,c_{lj}}_{C^{(i)}}
			=\inner{\pi_if,T_ic_{lj}}_{\calH_{T_i}} \\
		&=\inner{\pi_if,k_l^{T_i}(c_{lj})}_{\calH_{T_i}}.
	\end{align*}
	Thus, $\lam_{lj}^T=\lam_{lj}^{T_i}$, $l\le i$.
	Consequently, for the CONS $\{u_{ij}\}$	($=\bigcup_{i=1}^s\{u_{ij}\}_{j=1}^{n_i})$ of $C$,
	we have,
	\[
		\prod_{j=1}^{n_i}\lam_{ij}^T
		=\frac{\prod_{l=1}^i\prod_{j=1}^{n_l}\lam_{lj}^T}
			{\prod_{l=1}^{i-1}\prod_{j=1}^{n_l}\lam_{lj}^T}
		=\frac{\prod_{l=1}^i\prod_{j=1}^{n_l}\lam_{lj}^{T_i}}
		{\prod_{l=1}^{i-1}\prod_{j=1}^{n_l}\lam_{lj}^{T_{i-1}}},
	\]
	which is equal to $|T_{i-1}|^{1/2}/|T_i|^{1/2}$ by the first half of the proof.
\end{proof}

\section{Main results}\label{sec:main results}

We list the settings:
\begin{itemize}
	\item $C=\bigoplus_{p=1}^mC^p$, $C^p=\bigoplus_{i=1}^sC_i^p$: direct sums of
			finite-dimensional Hilbert spaces,
	\item $C_i^p$ is identified with the subspace of $C^p$ by the canonical injection,
	\item $\{c_{ij}^p\}_{j=1}^{n_i^p}$: a CONS for $C_i^p$, $i=1,\dots,s$, $p=1,\dots,m$,
	\item The order of the set $J^p$ of subscripts of $\{c_{ij}^p\}$
		is the lexicographic order of $\N^2$ obtained by identifying $ij$ with $(i,j)\in\N^2$:
		\[
		J^p=\{ij\colon 1\le i\le s;\ 1\le j\le n_i^p\},
		\]
		and the order of the Cartesian product $J=\prod_{p=1}^mJ^p$
		is the lexicographic order of $J^p$, $p=1,\dots,m$.
	\item The subset $J_i$ of $J$ is defined by
		\[
			J_i=\{(ij_1,\dots,ij_m)\colon1\le j_p\le n_i^p,\ 1\le p\le m\}.
		\]
\end{itemize}

Now, the set $\{c_{ij}^p\}_{ij\in J^p}$ is a CONS of $C^p$,
and if we put, for $\gam=(ij_1,\dots,ij_m)\in J_i$, $i=1,\dots,s$,
\[
	c_\gam=c_{ij_1}^1\otimes\dots\otimes c_{ij_m}^m\in\bigotimes_{p=1}^mC_i^p,
\]
then $c=\{c_\gam\}_{\gam\in\bigcup_{i=1}^sJ_i}$ is a CONS of the Hadamard product
\[
	\bigodot_{p=1}^mC^p=\bigoplus_{i=1}^s\bigotimes_{p=1}^mC_i^p.
\]
If $A^p=(A_{ij}^p)_{i,j=1}^s\in\calL(C^p)$ is positive semidefinite
for each $p=1,\dots,m$,
then the Hadamard product RKHS $\bigodot_{p=1}^m\calH_{A^p}$ of the
$C^p$-RKHS $\calH_{A^p}$ is a $\bigodot_{p=1}^mC^p$-RKHS on the set $E=\{1,\dots,s\}$,
whose reproducing kernel is given by
\[
	\bigodot_{p=1}^mA^p
	=(A_{ij}^1\otimes\dots\otimes A_{ij}^m)_{i,j=1}^s
	\in\calL\Bigl(\bigodot_{p=1}^mC^p\Bigr).
\]
Note that the function $f_{ij}^{A^p}\in\calH_{A^p}$
vanishes for every point $i'\in E$ with $i'<i$.
For, if $k_{i'}^{A^p}$ is the reproducing kernel of $\calH_{A^p}$
for the point $i'\in E$, then, since $i'j'<ij$,
$f_{ij}^{A^p}$ is orthogonal to $k_{i'}^{A^p}(c_{i'j'}^p)$.
By the reproducing property, we have
\[
	\inner{f_{ij}^{A^p}(i'),c_{i'j'}^p}_{C_{i'}^p}=
	\inner{f_{ij}^{A^p},k_{i'}^{A^p}(c_{i'j'}^p)}_{\calH_{A^p}}=0,
\]
hence $f_{ij}^{A^p}(i')=0$, because the vectors $\{c_{i'j'}^p\}_{j'}$
span $C_{i'}^p$.

\begin{thm} \label{thm:main_ineq}
	In the above settings, suppose that $m\ge2$, $A^p\in\calL(C^p)$ is positive definite
	for each $p=1,\dots,m$ and that the set $\{k_i^{A^p}(c_{ij}^p)\}_{j=1}^{n_i^p}$
	is an orthogonal system for each $i$.
	Then, for each $\gam=(ij_1,\dots,ij_m)\in J_i$, the minimum norm
	$\lam_\gam^{\bigodot_{p=1}^mA^p}$ of IPIP	$(\bigodot_{p=1}^m\calH_{A^p},c,\gam)$
	satisfies the following inequality:
	\begin{align}\label{neq:lamOppenSchur}
		\lam_\gam^{\bigodot_{p=1}^mA^p}
		&\le\prod_{p=1}^m\lam_{ij_p}^{A^p}\cdot\Bigl\{\prod_{p=1}^m(\lam_{ij_p}^{A^p})^2
			\inner{A_{ii}^pc_{ij_p}^p,c_{ij_p}^p}_{\calH_{A^p}} \\
		&\qquad\qquad\qquad -\prod_{p=1}^m\bigl[(\lam_{ij_p}^{A^p})^2
			\inner{A_{ii}^pc_{ij_p}^p,c_{ij_p}^p}_{\calH_{A^p}}-1\bigr]\Bigr\}^{-1/2}.
	\end{align}
	Equality holds if and only if there exists a $l\le i$ such that $f_{ij_p}^{A^p}$
	is a linear combination of $k_i^{A^p}(c_{ij_p}^p)$
	and $\{k_l^{A^p}(c_{lj'}^p)\}_{lj'\le ij_p}$ for each $p=1,\dots,m$.
	In the case of equality, the minimum norm solution is given by
	\begin{equation}\label{eq:min sol}
		f_\gam^{\bigodot_{p=1}^mA^p}
		=\frac{\bigodot_{p=1}^m(\lam_{ij_p}^{A^p})^2k_i^{A^p}(c_{ij_p}^p)
			-\bigodot_{p=1}^m\{(\lam_{ij_p}^{A^p})^2k_i^{A^p}(c_{ij_p}^p)-f_{ij_p}^{A^p}\}}
			{\prod_{p=1}^m(\lam_{ij_p}^{A^p})^2
			\inner{A_{ii}^pc_{ij_p}^p,c_{ij_p}^p}_{\calH_{A^p}}-\prod_{p=1}^m\bigl[(\lam_{ij_p}^{A^p})^2
			\inner{A_{ii}^pc_{ij_p}^p,c_{ij_p}^p}_{\calH_{A^p}}-1\bigr]}.
	\end{equation}
\end{thm}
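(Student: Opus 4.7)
The plan is to construct an explicit element of $\bigotimes_{p=1}^m\calH_{A^p}$ whose pullback by $\phi^*$ solves the IPIP of order $\gam$, and then combine the restriction inequality \eqref{eq:rest_ineq} with the minimum-norm definition to bound $\lam_\gam^{\bigodot_p A^p}$. Write $g_p=f_{ij_p}^{A^p}$, $h_p=(\lam_{ij_p}^{A^p})^2 k_i^{A^p}(c_{ij_p}^p)$, and $\al_p=(\lam_{ij_p}^{A^p})^2\inner{A_{ii}^p c_{ij_p}^p,c_{ij_p}^p}$, so $\al_p\ge1$ by Lemma~\ref{lem:main_ineq}. Set
\[
F=\bigotimes_{p=1}^m h_p-\bigotimes_{p=1}^m(h_p-g_p),\qquad D=\prod_p\al_p-\prod_p(\al_p-1),\qquad \tilde F=F/D.
\]
The candidate solution for the Hadamard-product IPIP will be $\phi^*\tilde F$.

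First I would verify that $\phi^*\tilde F\in P_\gam^{\bigodot_p A^p}$. For $l<i$, the remark just before the theorem gives $g_p(l)=0$, hence $h_p(l)=(h_p-g_p)(l)$ and the two tensors in $F$ agree at $l$, so $\phi^*F(l)=0$; this handles all $\bet\in J_{l'}$ with $l'<i$. For $\bet=(ij_1',\dots,ij_m')\in J_i$ with $\bet<\gam$, let $p^*$ be the first position where $j_{p^*}'<j_{p^*}$; by the orthogonality hypothesis on $\{k_i^{A^p}(c_{ij}^p)\}_j$, the factor $\inner{h_{p^*}(i),c_{ij_{p^*}'}^{p^*}}=(\lam_{ij_{p^*}}^{A^{p^*}})^2\inner{k_i^{A^{p^*}}(c_{ij_{p^*}}^{p^*}),k_i^{A^{p^*}}(c_{ij_{p^*}'}^{p^*})}_{\calH_{A^{p^*}}}$ vanishes, while by the IPIP defining $g_{p^*}$ so does $\inner{g_{p^*}(i),c_{ij_{p^*}'}^{p^*}}$; thus both tensor products in $F$ contribute $0$. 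For $\bet=\gam$ one gets $\inner{\phi^*F(i),\bigotimes c_{ij_p}^p}=\prod\al_p-\prod(\al_p-1)=D>0$, so $\phi^*\tilde F$ is indeed in $P_\gam^{\bigodot_p A^p}$.

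Next I would compute $\|F\|_{\bigotimes_p\calH_{A^p}}^2$ using \eqref{eq:tens inner} and the tensor identities
\[
\|h_p\|^2=(\lam_{ij_p}^{A^p})^2\al_p,\quad \|h_p-g_p\|^2=(\lam_{ij_p}^{A^p})^2(\al_p-1),\quad \inner{h_p,h_p-g_p}=(\lam_{ij_p}^{A^p})^2(\al_p-1),
\]
where the last identity uses $\inner{h_p,g_p}=(\lam_{ij_p}^{A^p})^2\ol{\inner{g_p,k_i^{A^p}(c_{ij_p}^p)}}=(\lam_{ij_p}^{A^p})^2$ by the defining IPIP condition for $g_p$. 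Expanding gives $\|F\|^2=\prod_p(\lam_{ij_p}^{A^p})^2\cdot D$, hence $\|\tilde F\|=\prod_p\lam_{ij_p}^{A^p}/\sqrt D$. Combined with $\lam_\gam^{\bigodot_p A^p}\le\|\phi^*\tilde F\|_{\bigodot_p\calH_{A^p}}\le\|\tilde F\|_{\bigotimes_p\calH_{A^p}}$, this yields \eqref{neq:lamOppenSchur}.

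Equality in \eqref{neq:lamOppenSchur} is equivalent to equality in both of the above steps: $\phi^*\tilde F$ is the unique minimum-norm solution (yielding \eqref{eq:min sol}), and $\tilde F$ is extremal in $\bigotimes_p\calH_{A^p}$. The main obstacle is the extremality analysis. By Proposition~\ref{prp:extremal}, $\tilde F\in(\ker\phi^*)^\perp$ means $F\in\bigvee_{l}\bigotimes_p\ran k_l^{A^p}$. Decomposing $g_p=g_p^{(i)}+g_p^{(\perp)}$ with $g_p^{(i)}\in\ran k_i^{A^p}$, one expands $\bigotimes_p(h_p-g_p)$ and notes that the simple tensor $\bigotimes_p h_p$ already lies in $\bigotimes_p\ran k_i^{A^p}$, while the mixed-index tensors in the expansion (factors in different kernel ranges) lie outside every $\bigotimes_p\ran k_l^{A^p}$ and must cancel; by a direct combinatorial argument this forces $g_p^{(\perp)}\in\ran k_l^{A^p}$ for a single $l\le i$ common to all $p$. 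Combined with the IPIP conditions on $g_p$, this is equivalent to the stated span condition $f_{ij_p}^{A^p}\in\Span(k_i^{A^p}(c_{ij_p}^p),\{k_l^{A^p}(c_{lj'}^p)\}_{lj'\le ij_p})$; the converse is verified by substituting this form directly into $F$ and checking that all mixed terms cancel, making $F$ extremal and $\phi^*\tilde F$ equal to the right-hand side of \eqref{eq:min sol}.
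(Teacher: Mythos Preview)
Your construction of $F$, the verification that $\phi^*\tilde F\in P_\gam^{\bigodot_p A^p}$, the norm computation $\|F\|^2=\prod_p(\lam_{ij_p}^{A^p})^2\cdot D$, and the resulting chain $\lam_\gam\le\|\phi^*\tilde F\|\le\|\tilde F\|$ are exactly the paper's argument, and your orthogonality check for the case $i'<i$ via $g_p(l)=0$ is in fact slightly cleaner than the paper's expansion argument.

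The only place where you diverge from the paper is the equality analysis, and there your sketch is looser than it needs to be. The paper observes that $\bigotimes_p h_p$ is already extremal, so equality is equivalent to extremality of the \emph{simple} tensor $\bigotimes_p(h_p-g_p)$, and then applies Proposition~\ref{prp:tensCiDi} directly: either this tensor vanishes, or there is a single $l$ with $h_p-g_p\in\ran k_l^{A^p}$ for every $p$. Your route through the decomposition $g_p=g_p^{(i)}+g_p^{(\perp)}$ and a ``direct combinatorial argument'' is reinventing Proposition~\ref{prp:tensCiDi}, and your stated conclusion ``$g_p^{(\perp)}\in\ran k_l^{A^p}$'' is not quite what extremality yields: when $l<i$ you also need $h_p-g_p^{(i)}=0$, i.e.\ the $\ran k_i^{A^p}$-component of $f_{ij_p}^{A^p}$ must equal $(\lam_{ij_p}^{A^p})^2 k_i^{A^p}(c_{ij_p}^p)$ exactly. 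The paper recovers this coefficient by pairing $f_{ij_p}^{A^p}$ against itself using the IPIP conditions. Finally, you correctly note that equality also requires $\phi^*\tilde F$ to be the minimum-norm element, but you do not verify this in the converse direction; it follows because once $h_p-g_p\in\ran k_l^{A^p}$ with $l\le i$, the element $h$ lies in $\Span\{k_{\gam'}\colon\gam'\le\gam\}$, hence is minimal by Theorem~\ref{thm:inner_intp}(iii). With these two points tightened, your argument coincides with the paper's.
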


\begin{proof}
	By the reproducing property of $\bigodot_{p=1}^mk_i^{A^p}$,
	for each $f_p\in C^p$, $p=1,\dots,m$, we have
	\begin{align*}
		\inner{\bigodot_{p=1}^mf_p,
		\bigodot_{p=1}^mk_i^{A^p}(c_{ij_p}^p)}_{\bigodot_{p=1}^m\calH_{A^p}}
		=\prod_{p=1}^m\inner{f_p,k_i^{A^p}(c_{ij_p}^p)}_{\calH_{A^p}}.
	\end{align*}
	Let $f_{ij}^{A^p}$ be the minimum norm solution of IPIP
	$(\calH_{A^p},\{c_{ij}^p\}_{ij\in J_p},ij)$ and let $\lam_{ij}^{A^p}$
	be its norm.
	Consider the element $h$ of $\bigodot_{p=1}^m\calH_{A^p}$ defined by
	\begin{equation}\label{eq:h prod}
		h=\bigodot_{p=1}^m(\lam_{ij_p}^{A^p})^2k_i^{A^p}(c_{ij_p}^p)
		-\bigodot_{p=1}^m\{(\lam_{ij_p}^{A^p})^2k_i^{A^p}(c_{ij_p}^p)-f_{ij_p}^{A^p}\}.
	\end{equation}
	We show that $h$ satisfies the conditions for interpolation.
	First, we show that $h$ is orthogonal to $\bigodot_{p=1}^mk_{i'}^{A^p}(c_{i'j_p'}^p)$
	in $\bigodot_{p=1}^m\calH_{A^p}$ for every order $\gam'<\gam$ with
	$\gam'=(i'j_1',\dots,i'j_m')\in J_{i'}$.
	Since $\bigotimes_{p=1}^mk_{i'}^{A^p}(c_{i'j_p'}^p)$ is extremal,
	if $H$ is defined by
	\[
		H=
		\bigotimes_{p=1}^m(\lam_{ij_p}^{A^p})^2k_i^{A^p}(c_{ij_p}^p)
		-\bigotimes_{p=1}^m\{(\lam_{ij_p}^{A^p})^2k_i^{A^p}(c_{ij_p}^p)-f_{ij_p}^{A^p}\},
	\]
	then,
	\begin{equation}\label{eq:h=H}
		\inner{h,\bigodot_{p=1}^mk_{i'}^{A^p}(c_{i'j_p'}^p)}_{\bigodot_{p=1}^m\calH_{A^p}}
		=\inner{H,\bigotimes_{p=1}^mk_{i'}^{A^p}(c_{i'j_p'}^p)}_{\bigotimes_{p=1}^m\calH_{A^p}}.
	\end{equation}
	Since $i'\le i$, we divide the cases.

	Case $i'<i$:
	By expanding the second term of $H$ to the sum of simple tensors,
	each term of $H$ has at least one factor of the form $f_{ij_q}^{A^q}$,
	$1\le q\le m$.
	However, since $i'j_p'<ij_q$ for every $q$, by definition of $f_{ij_q}^{A^q}$
	we have $\inner{f_{ij_q}^{A^q},k_{i'}^{A^p}(c_{i'j_p'}^p)}=0$,
	So $H$ is orthogonal to $\bigotimes_{p=1}^mk_{i'}^{A^p}(c_{i'j_p'}^p)$.

	Case $i'=i$:
	Since $\gam'<\gam$, there exists a $p$ with $j_p'<j_p$ by definition
	of the lexicographic order.
	By expanding the second term of $H$, each term of simple tensors contains
	a factor $k_i^{A^p}(c_{ij_p}^p)$ or $f_{ij_p}^{A^p}$.
	By the hypothesis of the orthogonal system,
	$k_i^{A^p}(c_{ij_p}^p)\perp k_i^{A^p}(c_{ij_p'}^p)$.
	On the other hand, by definition $f_{ij_p}^{A^p}\perp f_{ij_p'}^{A^p}$.
	Thus, $H$ is orthogonal to $\bigotimes_{p=1}^mk_{i'}^{A^p}(c_{i'j_p'}^p)$, as desired.

	Second, we calculate the inner product of $h$ with $\bigodot_{p=1}^mk_i^{A^p}(c_{ij_p}^p)$
	by~\eqref{eq:h=H}:
	\begin{align}
		\inner{h,\bigodot_{p=1}^mk_i^{A^p}(c_{ij_p}^p)}_{\bigodot_{p=1}^m\calH_{A^p}}
		&=\prod_{p=1}^m(\lam_{ij_p}^{A^p})^2\inner{A_{ii}^pc_{ij_p}^p,c_{ij_p}^p}_{C_i^p} \\
		&\qquad\qquad	-\prod_{p=1}^m\bigl[(\lam_{ij_p}^{A^p})^2
				\inner{A_{ii}^pc_{ij_p}^p,c_{ij_p}^p}_{C_i^p}-1\bigr],
	\end{align}
	which is easily seen to be greater than or equal to 1, since
	$(\lam_{ij_p}^{A^p})^2\inner{A_{ii}^pc_{ij_p}^p,c_{ij_p}^p}_{C_i^p}\ge1$
	by Lemma~\ref{lem:main_ineq}.
	Thus,
	\begin{equation}\label{eq:ext fn}
		\Bigl\{\prod_{p=1}^m(\lam_{ij_p}^{A^p})^2\inner{A_{ii}^pc_{ij_p}^p,c_{ij_p}^p}
		-\prod_{p=1}^m\bigl[(\lam_{ij_p}^{A^p})^2\inner{A_{ii}^pc_{ij_p}^p,c_{ij_p}^p}
		-1\bigr]\Bigr\}^{-1}h
		\in P_\gam^{\bigodot_{p=1}^mA^p}.
	\end{equation}
	Also, by definition of $f_{ij_p}^{A^p}$ and $\lam_{ij_p}^{A^p}$,
	\begin{align}
		\|h\|^2_{\bigodot_{p=1}^m\calH_{A^p}}
		&\le\Bigl\|\bigotimes_{p=1}^m(\lam_{ij_p}^{A^p})^2k_i^{A^p}(c_{ij_p}^p)
			-\bigotimes_{p=1}^m\{(\lam_{ij_p}^{A^p})^2k_i^{A^p}(c_{ij_p}^p)-f_{ij_p}^{A^p}\}
			\Bigr\|^2_{\bigotimes_{p=1}^m\calH_{A^p}} \\
		&=\prod_{p=1}^m(\lam_{ij_p}^{A^p})^4\inner{A_{ii}^pc_{ij_p}^p,c_{ij_p}^p} \\
		&\qquad\qquad
			+\prod_{p=1}^m\Bigl\{(\lam_{ij_p}^{A^p})^4\inner{A_{ii}^pc_{ij_p}^p,c_{ij_p}^p}
			+(\lam_{ij_p}^{A^p})^2-2(\lam_{ij_p}^{A^p})^2\Bigr\} \\
		&\qquad\qquad -2\re\prod_{p=1}^m\Bigl\{(\lam_{ij_p}^{A^p})^2
			\biginner{k_i^{A^p}(c_{ij_p}^p),
				(\lam_{ij_p}^{A^p})^2k_i^{A^p}(c_{ij_p}^p)-f_{ij_p}^{A^p}}
			\Bigr\} \\
		&=\prod_{p=1}^m(\lam_{ij_p}^{A^p})^2
			\Bigl\{\prod_{p=1}^m(\lam_{ij_p}^{A^p})^2
			\inner{A_{ii}^pc_{ij_p}^p,c_{ij_p}^p}-\prod_{p=1}^m\bigl[(\lam_{ij_p}^{A^p})^2
			\inner{A_{ii}^pc_{ij_p}^p,c_{ij_p}^p}-1\bigr]\Bigr\}.
	\end{align}
	Therefore,
	\begin{align}
		\lam_\gam^{\bigodot_{p=1}^mA^p}
		&\le\|h\|_{\bigodot_{p=1}^m\calH_{A^p}} \\
		&\qquad\qquad \times
			\Bigl\{\prod_{p=1}^m(\lam_{ij_p}^{A^p})^2\inner{A_{ii}^pc_{ij_p}^p,c_{ij_p}^p}
			-\prod_{p=1}^m\bigl[(\lam_{ij_p}^{A^p})^2\inner{A_{ii}^pc_{ij_p}^p,c_{ij_p}^p}
			-1\bigr]\Bigr\}^{-1} \\
		&\le\frac{\prod_{p=1}^m\lam_{ij_p}^{A^p}}{\Bigl\{\prod_{p=1}^m(\lam_{ij_p}^{A^p})^2
			\inner{A_{ii}^pc_{ij_p}^p,c_{ij_p}^p}-\prod_{p=1}^m\bigl[(\lam_{ij_p}^{A^p})^2
			\inner{A_{ii}^pc_{ij_p}^p,c_{ij_p}^p}-1\bigr]\Bigr\}^{1/2}}.
	\end{align}
	This completes the proof of the inequality.

	Next, we consider the equality condition of the inequality.
	Noting the identity~\eqref{eq:h prod} and Proposition~\ref{prp:extremal},
	we see from the above proof of the inequality that the equality holds
	if and only if the simple tensor
	$\bigotimes_{p=1}^m\{(\lam_{ij_p}^{A^p})^2k_i^{A^p}(c_{ij_p}^p)-f_{ij_p}^{A^p}\}$
	is extremal.
	From Proposition~\ref{prp:tensCiDi}, this is equivalent to
	the condition that there exists a $l\in\{1,\dots,s\}$ such that,
	for each $p=1,\dots,m$, there exists a $c_l^p\in C_l^p$ satisfying
	\begin{align}
		(\lam_{ij_p}^{A^p})^2k_i^{A^p}(c_{ij_p}^p)-f_{ij_p}^{A^p}=k_l^{A^p}(c_l^p).
	\end{align}
	By Theorem~\ref{thm:inner_intp}, the minimum norm solution $f_{ij_p}^{A^p}$
	belongs to the span of the set $\{k_{i'}^{A^p}(c_{i'j'}^p)\colon i'j'\le ij_p\}$,
	and since $\calH_{A^p}$ is a direct sum of $\ran k_1^{A^p},\dots,\ran k_s^{A^p}$,
	we conclude that $l\le i$, and that, if $l=i$,
	$c_l^p\in\Span\{c_{ij'}^p\}_{j'\le j_p}$.
	If $f_{ij_p}^{A^p}$ is of the form
	\[
		f_{ij_p}^{A^p}
		=\gam k_i^{A^p}(c_{ij_p}^p)+\sum_{j'<j_p}\gam_{j'}k_i^{A^p}(c_{ij'}^p),
	\]
	then, by the interpolation condition, we obtain
	\[
		\gam
		=\inner{f_{ij_p}^{A^p},\gam k_i^{A^p}(c_{ij_p}^p)+\sum_{j'<j_p}\gam_{j'}k_i^{A^p}(c_{ij'}^p)}
		=\|f_{ij_p}^{A^p}\|^2=(\lam_{ij_p}^{A^p})^2,
	\]
	hence equality holds if and only if there exists a $l\le i$ such that,
	for every $p=1,\dots,m$, $f_{ij_p}^{A^p}$ is a linear combination of
	$k_i^{A^p}(c_{ij_p}^p)$ and $\{k_l^{A^p}(c_{lj'}^p)\}_{lj'\le ij_p}$.
	Moreover, if equality occurs, we see from the above proof that the
	function~\eqref{eq:ext fn} has the minimum norm for the IPIP.\@
	Therefore, this is the extremal function $f_\gam^{\bigodot_{p=1}^mA^p}$.
\end{proof}

\begin{rem}
	If $i=1$, the equality condition of the above Theorem is satisfied.
	Therefore, equality holds.
\end{rem}

\section{Application to determinant inequalities}

We derive determinant inequalities from the minimum norms $\lam_n$.
To evaluate the inequalities, we need to change the order of the products,
in which case the following elementary inequality is useful.
The case of $m=2$ has appeared in~\cite{Lin14}*{Prop.~2.1}.

\begin{lem}\label{lem:neq_elem}
	Let $m$ and $n$ be natural numbers.
	If every $a_{ij}\ge1$, then	the following inequality holds:
	\begin{equation}\label{eq:elem ineq}
		\prod_{i=1}^n\Bigl\{\prod_{j=1}^ma_{ij}-\prod_{j=1}^m(a_{ij}-1)\Bigr\}
		\ge\prod_{j=1}^m\prod_{i=1}^na_{ij}-\prod_{j=1}^m\Bigl(\prod_{i=1}^na_{ij}-1\Bigr).
	\end{equation}
	Equality occurs if and only if one of the conditions (i)--(iii) holds:
	\begin{enumerate}
		\item $m=1$ or $n=1$,
		\item There exists a column of the matrix $(a_{ij})$ such that every entry of
				the column is one, i.e.,~there exists a $j$ such that $a_{ij}=1$ for every $i$,
		\item There exists a row of the matrix $(a_{ij})$ such that
				every entry of the other rows is one, i.e.,~there exists $i_0$ such that
				$a_{ij}=1$ for every $i\ne i_0$ and every $j$.
	\end{enumerate}
\end{lem}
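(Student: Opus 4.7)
The plan is to recast \eqref{eq:elem ineq} as a set-inclusion for independent events, which trivializes the inequality and turns the equality clause into a combinatorial question. Set $x_{ij}:=1/a_{ij}\in(0,1]$ and let $\{E_{ij}\colon 1\le i\le n,\ 1\le j\le m\}$ be mutually independent events on some probability space with $P(E_{ij})=x_{ij}$. Writing $A_j:=\prod_{i=1}^n a_{ij}$, independence within each row gives
\[
P\Bigl(\bigcup_{j=1}^m E_{ij}\Bigr)=1-\prod_{j=1}^m(1-x_{ij})=\frac{\prod_j a_{ij}-\prod_j(a_{ij}-1)}{\prod_j a_{ij}},
\]
and independence across rows then yields
\[
P\Bigl(\bigcap_{i=1}^n\bigcup_{j=1}^m E_{ij}\Bigr)\cdot\prod_{i,j}a_{ij}=\prod_{i=1}^n\Bigl\{\prod_{j=1}^m a_{ij}-\prod_{j=1}^m(a_{ij}-1)\Bigr\},
\]
which is the LHS of \eqref{eq:elem ineq}. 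A symmetric computation shows that $P(\bigcup_{j=1}^m\bigcap_{i=1}^n E_{ij})\cdot\prod_{i,j}a_{ij}$ equals the RHS of \eqref{eq:elem ineq}.

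Consequently \eqref{eq:elem ineq} is equivalent to $P(\bigcap_i\bigcup_j E_{ij})\ge P(\bigcup_j\bigcap_i E_{ij})$, which is immediate from the obvious containment $\bigcup_j\bigcap_i E_{ij}\subseteq\bigcap_i\bigcup_j E_{ij}$: if some column $j_0$ has every $E_{i,j_0}$ occurring, then every row $i$ has at least one event occurring, namely $E_{i,j_0}$.

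For the equality clause, equality in \eqref{eq:elem ineq} translates to $P(D)=0$ with $D:=(\bigcap_i\bigcup_j E_{ij})\setminus(\bigcup_j\bigcap_i E_{ij})$. Decomposing $D$ according to the $\{0,1\}$-valued indicator pattern $(m_{ij})$ of $(E_{ij})$, independence expresses each contribution as $\prod_{m_{ij}=1}x_{ij}\prod_{m_{ij}=0}(1-x_{ij})$, the sum ranging over all $\{0,1\}$-matrices $(m_{ij})$ with a $1$ in every row and a $0$ in every column. Since $x_{ij}>0$, such a contribution vanishes iff some $m_{ij}=0$ has $a_{ij}=1$. Hence equality is equivalent to the combinatorial statement: \emph{every such matrix $(m_{ij})$ has a $0$ at some $(i,j)$ with $a_{ij}=1$.}

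The main obstacle is the combinatorial verification that, for $m,n\ge2$, this condition on the pattern $S:=\{(i,j):a_{ij}=1\}$ forces case (ii) or case (iii). I would argue the contrapositive: if both (ii) and (iii) fail, then I exhibit an explicit violating matrix. Starting from $\1_S$, which already has a $0$ in every column by failure of (ii), I would add one extra $1$ in each row that has no $S$-entry. Failure of (iii) supplies at least two distinct rows containing some non-$S$ entry; placing the added $1$s in columns where one of those rows already carries a $0$ from its non-$S$ entry keeps every column from becoming all-$1$. The resulting matrix has a $1$ in every row, a $0$ in every column, and all its $0$s at positions with $a_{ij}\ne1$, contradicting the combinatorial statement. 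The boundary cases $m=1$ and $n=1$ of (i) are verified by direct inspection.
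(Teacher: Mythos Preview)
Your probabilistic recasting is correct and is a genuinely different route from the paper. The paper substitutes $p_{ij}=a_{ij}-1\ge0$ and expands both sides as polynomials in the $p_{ij}$; the inequality then becomes $\sum_{\al\in E}p^\al\le\sum_{\al\in F}p^\al$ for the index sets $E=\{\al:\ex i,\ \al_{i\blt}=\1\}$ and $F=\{\al:\fa j,\ \al_{\blt j}>0\}$, and follows from $E\subset F$. Under the bijection $\al\leftrightarrow\1-m$ your event inclusion $\bigcup_j\bigcap_iE_{ij}\subset\bigcap_i\bigcup_jE_{ij}$ is exactly this containment, and both approaches land on the same combinatorial equality criterion: every $\{0,1\}$-matrix with a $1$ in each row and a $0$ in each column must have a $0$ at some position with $a_{ij}=1$. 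Your argument buys a cleaner conceptual picture; the paper's buys a purely algebraic proof with no auxiliary probability space.

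Where your sketch falls short is the construction of the violating pattern. Starting from $\1_S$ and ``adding one $1$ per empty row in a column where one of the two special rows has a non-$S$ zero'' is under-specified: if the row you are filling is itself one of the two special rows you may overwrite the very zero you are relying on, and you have not ensured the two special non-$S$ positions lie in distinct columns. The paper's construction is tighter and avoids these issues: by failure of (ii), for each column $j$ choose a row $i_j$ with $a_{i_jj}>1$; by failure of (iii) and $m\ge2$ the map $j\mapsto i_j$ can be chosen non-constant; then the matrix with $0$ exactly at the positions $(i_j,j)$ and $1$ elsewhere has a $0$ in every column, a $1$ in every row (since no row is hit by every $j$), and all its zeros at non-$S$ positions. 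You should also record the easy forward direction for (ii) and (iii): in case (ii) any pattern has a $0$ in the all-one column, hence at an $S$-position; in case (iii) any pattern has a $1$ at some $(i_0,j^*)$, and the required $0$ in column $j^*$ must then sit in a row $\ne i_0$, hence in $S$.
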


\begin{proof}
	Let $p_{ij}=a_{ij}-1$.
	Let $p_{i\blt}=(p_{i1},\dots,p_{im})\in\R_+^m$ and
	$p_{\blt j}=(p_{1j},\dots,p_{nj})\in\R_+^n$, respectively,
	be the $i$-th row and the $j$-th column of the matrix
	$p=(p_{ij})\in M_{n,m}(\R_+)$.
	Using multi-index notation, for the matrix $\al=(\al_{ij})\in M_{n,m}(\{0,1\})$,
	we denote by $p^\al$ the product $\prod_{i=1}^n\prod_{j=1}^mp_{ij}^{\al_{ij}}$.
	The matrix with each entry equal to one is denoted by $\1$.
	Let the order of the matrices be the product order with respect to each entry,
	that is, $(b_{ij})\ge(c_{ij})\iff b_{ij}\ge c_{ij}$ for each $i$ and $j$.
	Then,
	\begin{align*}
		\prod_{j=1}^ma_{ij}
		&=\prod_{j=1}^m(1+p_{ij})
			=\sum_{\al_{i\blt}}p_{i1}^{\al_{i1}}\cdots p_{im}^{\al_{im}}
			=\sum_{\al_{i\blt}}p_{i\blt}^{\al_{i\blt}}, \\
		\prod_{j=1}^m(a_{ij}-1)
		&=p_{i1}\dots p_{im}=p_{i\blt}^{\1},
	\end{align*}
	and we have identities:
	\begin{align*}
		\prod_{j=1}^ma_{ij}-\prod_{j=1}^m(a_{ij}-1)
		&=\sum_{\al_{i\blt}<\1}p_{i\blt}^{\al_{i\blt}}, \\
		\prod_{i=1}^n\Bigl\{\prod_{j=1}^ma_{ij}-\prod_{j=1}^m(a_{ij}-1)\Bigr\}
		&=\sum_{\fa i,\ \al_{i\blt}<\1}p_{1\blt}^{\al_{1\blt}}\dots p_{n\blt}^{\al_{n\blt}}
			=\sum_{\fa i,\ \al_{i\blt}<\1}p^\al, \\
		\prod_{j=1}^m\prod_{i=1}^na_{ij}
		&=\prod_{i=1}^n\prod_{j=1}^ma_{ij}
			=\sum_\al p_{1\blt}^{\al_{1\blt}}\dots p_{n\blt}^{\al_{n\blt}}
			=\sum_\al p^\al, \\
		\prod_{j=1}^m\Bigl(\prod_{i=1}^na_{ij}-1\Bigr)
		&=\prod_{j=1}^m\sum_{\al_{\blt j}>0}p_{\blt j}^{\al_{\blt j}}
			=\sum_{\fa j,\ \al_{\blt j}>0}p_{\blt1}^{\al_{\blt1}}\dots p_{\blt m}^{\al_{\blt m}}
			=\sum_{\fa j,\ \al_{\blt j}>0}p^\al.
	\end{align*}
	Thus, if we define the subsets $E$ and $F$ of the set $M_{n,m}(\{0,1\})$ by
	\begin{align}
		E
		&=\{\al\in M_{n,m}(\{0,1\})\colon\ex i\text{ s.t. }\al_{i\blt}=\1\}, \\
		F
		&=\{\al\in M_{n,m}(\{0,1\})\colon\fa j,\ \al_{\blt j}>0\},
	\end{align}
	then the inequality to prove is the following:
	\begin{equation}\label{neq:sum prod}
		\sum_{\al\in E}p^\al\le\sum_{\al\in F}p^\al.
	\end{equation}
	However, it is clear that $E\subset F$, and since $p^\al\ge0$,
	we have proved the inequality~\eqref{eq:elem ineq}.

	Next, we consider the case of equality.
	The condition for equality is $p^\al=0$ for every $\al\in F\sm E$,
	where
	\[
		F\sm E
		=\{\al\in M_{n,m}(\{0,1\})\colon\ \al_{i\blt}<\1\text{ and }\al_{\blt j}>0
		\text{ for every $i$ and $j$}\}.
	\]
	Let us divide the cases as follows:
	\begin{enumerate}[(a)]
		\item $p_{\bullet j}=0$ for some $j$:
				In this case, we have $p^\al=0$ for every $\al\in F$.
				Thus, $\sum_{\al\in E}p^\al=\sum_{\al\in F}p^\al=0$,
				hence equality holds.
		\item $p_{\blt j}\ne0$ for every $j$: We further divide this division into cases.
			\begin{enumerate}[1.]
				\item $m=1$ or $n=1$: Then, the inequality~\eqref{eq:elem ineq} is an equality.
				\item The case where there exists an $i_0$ such that $p_{ij}=0$ for every
						$i\ne i_0$ and every $j$:
						Since, every $i$-th row, $i\ne i_0$, of the matrix $p$ is 0,
						the inequality~\eqref{eq:elem ineq} reduces to the case $n=1$,
						in which case equality holds.
				\item Otherwise:
						In this case, for every $j=1,\dots,m$, there exists an $i_j$ such that
						$p_{i_jj}\ne0$.
						Since $m\ge2$, we can choose the set $\{i_j\}_{j=1}^m$ with cardinality
						greater than or equal to two.
						Then, the matrix $\al=(\al_{ij})$ defined by
						\[
							\al_{ij}=\begin{cases}
								1, & (i=i_j) \\
								0, & (i\ne i_j)
							\end{cases}
						\]
						satisfies $\al\in F\sm E$, hence $p^\al\ne0$.
						Therefore, equality does not hold in this case.
			\end{enumerate}
	\end{enumerate}
	From the above considerations, we conclude that equality holds if and only if
	in case (a), (b)-1, or (b)-2.
	This completes the proof of the equality conditions.
\end{proof}

As in the list at the beginning of \S\ref{sec:main results},
to obtain Oppenheim-Schur's inequality for block matrices $A^p$,
we assume the following:
\begin{itemize}
	\item $C^p=\bigoplus_{i=1}^sC_i^p$,\quad$C_i^p=\C^{n_{ip}}$,
			\quad$n_p=\dim C^p=\sum_{i=1}^sn_{ip}$,\quad $p=1,\dots,m$,
	\item $A^p=(A_{ij}^p)_{i,j=1}^s$,\quad$A_{ij}^p\in M_{n_{ip},n_{jp}}(\C)$,
			where $A^p$ is a $s\times s$ symmetric partitioned positive semidefinite
			block matrix ($p=1,\dots,m$).
			By a {\em symmetric partitioned} block matrix, we mean that the main-diagonal
			blocks are square matrices.
	\item $\{c_{ij}^p\}_{j=1}^{n_{ip}}$:
			a CONS of $\C^{n_{ip}}$ consisting of eigenvectors of $A_{ii}^p\ge0$	($i=1,\dots,s$).
\end{itemize}

For each $i=1,\dots,s$, $\{\bigotimes_{p=1}^mc_{ij_p}^p\colon j_p=1,\dots,n_{ip}\}$
is a CONS consisting of eigenvectors of the matrix $\bigotimes_{p=1}^mA_{ii}^p$
(\cite{HornJohnson91}*{p.~245}).
Let $\calH_{A^p}$ ($p=1,\dots,m$) be the $C^p$-RKHS on $E$ with
the reproducing kernel $A^p\in\calL(C^p)$.
Then, the reproducing kernel of the Hadamard product RKHS $\bigodot_{p=1}^m\calH_{A^m}$
is given by the {\em Hadamard product} of $A^1$, \dots, $A^m$:
\[
	\bigodot_{p=1}^mA^p=A^1*\dots*A^m
	=(A_{ij}^1\otimes\dots\otimes A_{ij}^m)_{i,j=1}^s,
\]
which is also called the {\em Khatri-Rao product} of the block matrices
$A^1,\dots,A^m$.
We denote by $(X)_i$ the $i$-th leading principal block submatrix of a block matrix $X$,
where we define $(X)_0=1$ for simplicity's sake.

\begin{thm}\label{thm:sub ineq}
	In the above settings, if we assume that each block matrix $A^1,\dots$, $A^m$
	is positive definite,	then their Hadamard product $\bigodot_{p=1}^mA^p$ satisfies
	the following inequality for each $i=1,\dots,s$:
	\begin{align}\label{eq:main th1}
		\frac{|(\bigodot_{p=1}^mA^p)_i|}{|(\bigodot_{p=1}^mA^p)_{i-1}|}
		&\ge\prod_{p=1}^m|A^p_{ii}|^{\sig_{ip}}
			-\prod_{p=1}^m\Bigl\{|A^p_{ii}|^{\sig_{ip}}
				-\Bigl(\frac{|(A^p)_i|}{|(A^p)_{i-1}|}\Bigr)^{\sig_{ip}}\Bigr\},
	\end{align}
	with $\sig_{ip}=n_{ip}^{-1}\prod_{q=1}^mn_{iq}$.
	Equality occurs if and only if one of the conditions (a)--(c) holds:
	\begin{enumerate}[(a)]
		\item $m=1$ or $i=1$.
		\item There exists a $p$ with $1\le p\le m$ such that $A_{li}^p$ and $A_{il}^p$ is 0
				for every $l<i$.
		\item $n_{ip}=1$ for every $p$, and there exists an $i_0$ with $i_0<i$ such that
				$A^p_{li}=A^p_{li_0}(A^p_{i_0i_0})^{-1}A^p_{i_0i}$ for every $l$ with $l<i$
				and every $p$.
	\end{enumerate}
\end{thm}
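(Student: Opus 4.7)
The plan is to chain three ingredients: Lemma~\ref{lem:prod det} converts the determinant ratio into a product of IPIP minimum norms, Theorem~\ref{thm:main_ineq} bounds each such norm for the Hadamard product RKHS in terms of the factor IPIPs, and Lemma~\ref{lem:neq_elem} rearranges the resulting product into the form of the stated inequality.

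First I would verify the orthogonal-system hypothesis of Theorem~\ref{thm:main_ineq}. Since $\{c_{ij}^p\}_{j=1}^{n_{ip}}$ is a CONS of eigenvectors of $A_{ii}^p$, writing $\mu_{ij}^p$ for the eigenvalue associated with $c_{ij}^p$, the reproducing property gives
\[
\inner{k_i^{A^p}(c_{ij}^p),k_i^{A^p}(c_{ij'}^p)}_{\calH_{A^p}}=\inner{A_{ii}^pc_{ij'}^p,c_{ij}^p}_{C_i^p}=\mu_{ij'}^p\delta_{jj'},
\]
so the system is orthogonal and $\prod_{j}\mu_{ij}^p=|A_{ii}^p|$. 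Lemma~\ref{lem:prod det} applied to $\bigodot_{p=1}^mA^p$ with the CONS $\{c_\gam\}$, and separately to each $A^p$, yields
\[
\prod_{\gam\in J_i}(\lam_\gam^{\bigodot_pA^p})^{-2}=\frac{|(\bigodot_pA^p)_i|}{|(\bigodot_pA^p)_{i-1}|},\qquad \prod_{j=1}^{n_{ip}}(\lam_{ij}^{A^p})^{-2}=\frac{|(A^p)_i|}{|(A^p)_{i-1}|}.
\]

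Next, for each $\gam=(ij_1,\dots,ij_m)\in J_i$, Theorem~\ref{thm:main_ineq} gives, after taking reciprocals and a short algebraic manipulation,
\[
(\lam_\gam^{\bigodot_pA^p})^{-2}\ge\prod_{p=1}^mx_{\gam,p}-\prod_{p=1}^m(x_{\gam,p}-y_{\gam,p}),
\]
where $x_{\gam,p}=\mu_{ij_p}^p$, $y_{\gam,p}=(\lam_{ij_p}^{A^p})^{-2}$, and $x_{\gam,p}/y_{\gam,p}\ge1$ by Lemma~\ref{lem:main_ineq}. Taking the product over $\gam\in J_i$ and applying Lemma~\ref{lem:neq_elem} to the matrix $(x_{\gam,p}/y_{\gam,p})$ with rows indexed by $\gam\in J_i$ and columns by $p$ (clearing the denominator $\prod_{\gam,p}y_{\gam,p}$ on both sides) yields
\[
\prod_{\gam\in J_i}(\lam_\gam^{\bigodot_pA^p})^{-2}\ge\prod_{p=1}^m\prod_{\gam\in J_i}x_{\gam,p}-\prod_{p=1}^m\Bigl(\prod_{\gam\in J_i}x_{\gam,p}-\prod_{\gam\in J_i}y_{\gam,p}\Bigr).
\]
As $\gam$ ranges over $J_i$ each index $j_p$ hits every value in $\{1,\dots,n_{ip}\}$ exactly $\prod_{q\ne p}n_{iq}$ times, so $\prod_\gam x_{\gam,p}=|A_{ii}^p|^{\sig_{ip}}$ and $\prod_\gam y_{\gam,p}=(|(A^p)_i|/|(A^p)_{i-1}|)^{\sig_{ip}}$, yielding the claimed inequality.

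The main obstacle is the equality analysis, which requires both (A) equality in Theorem~\ref{thm:main_ineq} for every $\gam\in J_i$ and (B) equality in Lemma~\ref{lem:neq_elem}. Case (a) is trivial since one side of the bound becomes vacuous. Case (b) corresponds to Lemma~\ref{lem:neq_elem}(ii): some column $p$ is entirely one, which by the equality clause of Lemma~\ref{lem:main_ineq} forces $A_{li}^p=A_{il}^p=0$ for all $l<i$; the vanishing factor $x_{\gam,p}-y_{\gam,p}=0$ then makes Theorem~\ref{thm:main_ineq}'s lower bound coincide with the direct upper bound $(\lam_\gam^{\bigodot_pA^p})^{-2}\le\prod_p\mu_{ij_p}^p$ coming from Lemma~\ref{lem:main_ineq} applied to $\bigodot_pA^p$, so (A) holds automatically. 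Case (c) corresponds to $|J_i|=1$, equivalently $n_{ip}=1$ for every $p$: then (B) is automatic and Theorem~\ref{thm:main_ineq}'s equality condition with common $l=i_0<i$ expresses $f_{i1}^{A^p}$ as a linear combination of $k_i^{A^p}(c_{i1}^p)$ and $\{k_{i_0}^{A^p}(c_{i_0j'}^p)\}_{j'}$ for each $p$; writing out the vanishing of the $C_l^p$-components of $f_{i1}^{A^p}$ for $l<i$ and solving using invertibility of $A_{i_0i_0}^p$ delivers the Schur-complement relation $A_{li}^p=A_{li_0}^p(A_{i_0i_0}^p)^{-1}A_{i_0i}^p$ for every $l<i$ and every $p$.
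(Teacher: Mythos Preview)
Your inequality argument is correct and is essentially the paper's: Lemma~\ref{lem:prod det} converts the determinant ratio to a product of minimum norms, Theorem~\ref{thm:main_ineq} bounds each factor, and Lemma~\ref{lem:neq_elem} collapses the product. Your variables $x_{\gam,p}$, $y_{\gam,p}$ are a clean repackaging of the paper's $\al_{\gam p}=(\lam_{ij_p}^{A^p})^2\xi(c_{ij_p}^p)=x_{\gam,p}/y_{\gam,p}$, and your squeeze argument for the converse of (b), sandwiching $(\lam_\gam)^{-2}$ between Theorem~\ref{thm:main_ineq}'s lower bound and Lemma~\ref{lem:main_ineq}'s upper bound for $\bigodot_pA^p$, is actually more explicit than the paper's own ``easy to see''.

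Your equality analysis, however, is incomplete in the ``only if'' direction. You match (b) with Lemma~\ref{lem:neq_elem}(ii) and (c) with $|J_i|=1$, but you never treat Lemma~\ref{lem:neq_elem}(iii) when $|J_i|>1$, i.e., when some $n_{iq}>1$. In that situation there is a single exceptional $\gam_0\in J_i$ with $\al_{\gam p}=1$ for all $\gam\ne\gam_0$ and all $p$; the paper observes that by varying only the $q$-th coordinate one can realize every index $ij_p$ with $p\ne q$ inside some $\gam\ne\gam_0$, so the equality clause of Lemma~\ref{lem:main_ineq} forces $A_{li}^p=0$ for all $l<i$ and all $p\ne q$, and one is back in case (b). You also skip the subcase $l=i$ in your treatment of (c): Theorem~\ref{thm:main_ineq} only guarantees $l\le i$, and when $l=i$ with $\dim C_i^p=1$ the minimum-norm solution is a scalar multiple of $k_i^{A^p}(c_{i1}^p)$, which again yields (b) rather than (c). Without these two reductions the list (a)--(c) is not shown to be exhaustive.
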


\begin{proof}
	If $m=1$ or $i=1$, then it is clear that equality holds.
	Thus, we assume that $m\ge2$ and $i\ge2$.
	Since the set $\{c_{ij}^p\}_{j=1}^{n_{ip}}$ is a CONS of $C_i^p$ consisting
	of eigenvectors of the matrix $A_{ii}^p\in\calL(C_i^p)$,
	we have	$|A_{ii}^p|=\prod_{j=1}^{n_{ip}}\xi(c_{ij}^p)$,
	where $\xi(v)$ denotes the eigenvalue of an eigenvector $v$.
	If $\gam=(ij_1,\dots,ij_m)\in J_i$, then we have $\al_{\gam p}\ge1$
	by Lemma~\ref{lem:main_ineq},
	where $\al_{\gam p}$ is defined by
	\begin{equation}\label{eq:lam_xi}
		\al_{\gam p}=(\lam_{ij_p}^{A^p})^2\xi(c_{ij_p}^p).
	\end{equation}
	Applying Lemma~\ref{lem:prod det}, Theorem~\ref{thm:main_ineq}
	and Lemma~\ref{lem:neq_elem}, we have
	\begin{align}
		\frac{|(\bigodot_{p=1}^mA^p)_i|}{|(\bigodot_{p=1}^mA^p)_{i-1}|}
		&=\prod_{\gam\in J_i}(\lam_\gam^A)^{-2} \\
		&\ge\prod_{\gam\in J_i}\frac{\prod_{p=1}^m(\lam_{ij_p}^{A^p})^2 \label{eq:sub ineq1}
			\xi(c_{ij_p}^p)-\prod_{p=1}^m\bigl[(\lam_{ij_p}^{A^p})^2
			\xi(c_{ij_p}^p)-1\bigr]}
			{\prod_{p=1}^m(\lam_{ij_p}^{A^p})^2} \\
		&=\frac{\prod_{\gam\in J_i}\bigl\{\prod_{p=1}^m(\lam_{ij_p}^{A^p})^2
			\xi(c_{ij_p}^p)-\prod_{p=1}^m\bigl[(\lam_{ij_p}^{A^p})^2
			\xi(c_{ij_p}^p)-1\bigr]\bigr\}}
			{\prod_{\gam\in J_i}\prod_{p=1}^m(\lam_{ij_p}^{A^p})^2} \\
		&\ge\frac{\prod_{p=1}^m\prod_{\gam\in J_i}(\lam_{ij_p}^{A^p})^2\xi(c_{ij_p}^p)
			-\prod_{p=1}^m\{\prod_{\gam\in J_i}(\lam_{ij_p}^{A^p})^2\xi(c_{ij_p}^p)-1\}}
			{\prod_{p=1}^m\prod_{\gam\in J_i}(\lam_{ij_p}^{A^p})^2}. \label{eq:sub ineq}
	\end{align}
	If $\pi_p\colon J=\prod_{j=1}^mJ^j\to J^p$ denotes the projection, then
	the cardinality of the set $\pi_p^{-1}(ij_p)$ is $\sig_{ip}$.
	Thus,
	\begin{align}
		\prod_{\gam\in J_i}\xi(c_{ij_p}^p)
		&=\prod_{\pi_p^{-1}(ij_p)}\prod_{ij_p\in J^p}\xi(c_{ij_p}^p)
			=|A^p_{ii}|^{\sig_{ip}}.
		\intertext{Similarly,}
		\prod_{\gam\in J_i}(\lam_{ij_p}^{A^p})^2 \label{eq:lam^2}
		&=\Bigl(\frac{|(A^p)_{i-1}|}{|(A^p)_i|}\Bigr)^{\sig_{ip}}.
	\end{align}
	Therefore,
	\begin{equation}
		\prod_{\gam\in J_i}(\lam_{ij_p}^{A^p})^2\xi(c_{ij_p}^p) \label{eq:lam^2xi}
		=\Bigl(\frac{|A^p_{ii}||(A^p)_{i-1}|}{|(A^p)_i|}\Bigr)^{\sig_{ip}}.
	\end{equation}
	Substituting these identities into~\eqref{eq:sub ineq},
	we obtain the inequality~\eqref{eq:main th1}, as desired.

	Now, we consider the equality condition of the inequality.
	From the above proof, we see that equality occurs when the following conditions
	(I) and (II) are satisfied:
	\begin{enumerate}[(I)]
		\item The equality conditions (ii) and (iii) of	Lemma~\ref{lem:neq_elem}
				for the matrix $(\al_{\gam p})$ of~\eqref{eq:lam_xi}.
		\item The equality condition of Theorem~\ref{thm:main_ineq} for each $\gam\in J_i$.
	\end{enumerate}

	First, we consider case (I) of the equality condition (ii).
	Then, there exists a $p$ such that $(\lam_{ij}^{A^p})^2\xi(c_{ij}^p)=1$ for
	every $j$.
	By Lemma~\ref{lem:main_ineq}, for every $l<i$, the $C_l^p$-component of $A^pc$
	vanishes for every $c\in C_i^p$.
	Thus, $A_{li}^p=A_{il}^p=0$ for every $l<i$, since $A^p$ is Hermite.
	This is the equality condition (b) of Theorem.
	Conversely, it is easy to see that equality holds if the condition (b) is satisfied.

	Next, we consider case (I) of the equality condition (iii).
	By Lemma~\ref{lem:neq_elem} there exists a $\gam_0=(ij_1',\dots,ij_m')\in J_i$
	such that $\al_{\gam p}=1$ for every $\gam=(ij_1,\dots,ij_m)\in J_i\sm\{\gam_0\}$
	and for every $p=1,\dots,m$.
	We divide the cases according to the integers $n_{ip}\ (=\dim C_i^p)$, $p=1,\dots,m$.
	If there exists a $q$ with $n_{iq}>1$, then we can choose an index $j_q$
	with $j_q\ne j_q'$ such that the index $\gam=(ij_1,\dots,ij_q,\dots,ij_m)\in J_i$
	satisfies $\gam\ne\gam_0$ for every index $ij_p$ ($p\ne q$).
	Thus, $\al_{\gam p}=1$.
	By Lemma~\ref{lem:main_ineq} we have $A_{li}^p=A_{il}^p=0$ for every $l<i$
	and every $p\ne q$.
	Therefore, in this case the condition (b) holds.
	The remaining case is one where $n_{ip}=1$ for every $p$,
	in which case the inequality~\eqref{eq:sub ineq} is always an equality,
	since only one order is possible.
	As for the inequality~\eqref{eq:sub ineq1}, by the equality condition of
	Theorem~\ref{thm:main_ineq}, there exists an $i_0\ (\le i)$ such that
	$f_{ij}^{A^p}$ is a linear combination of $k_i^{A^p}(u_{ij}^p)$ and
	$k_{i_0}^{A^p}(c_p)$, $c_p\in C_{i_0}^p$ for each $p$.
	If $i_0=i$, then $f_{ij}^{A^p}$ is a constant multiple of $k_i^{A^p}(u_{ij}^p)$,
	since $C_i^p$ is one dimensional.
	Similar to the proof of Lemma~\ref{lem:main_ineq},
	we have $A_{li}^p=A_{il}^p=0$ for each $l<i$, which is the case (b).
	If $i_0<i$, then $f_{ij}^{A^p}$ is a nonzero multiple of
	$k_i^{A^p}(u_{ij}^p)+k_{i_0}^{A^p}(c_p)$.
	By the orthogonality condition of $f_{ij}^{A^p}$, we have
	\[
		0=\inner{k_i^{A^p}(u_{ij}^p)+k_{i_0}^{A^p}(c_p),k_l^{A^p}(u_{lj'})}_{\calH_{A^p}}
		=\inner{A^p_{li}u_{ij}^p+A^p_{li_0}c_p,u_{lj'}}_{C_l^p}
	\]
	for each $l<i$ and for each $j'$.
	Since the set $\{u_{lj'}\}_{j'}$ spans $C_l^p$, $A^p_{li}u_{ij}^p+A^p_{li_0}c_p=0$
	for each $l<i$.
	Noting that $u_{ij}^p\in C_i^p$ and $C_i^p$ is one dimensional,
	we have	$A^p_{li}=A^p_{li_0}(A_{i_0i_0}^p)^{-1}A^p_{i_0i}$ for each $l<i$.
	This is the condition (c) of Theorem.
	Conversely, it is easy to see from the above proof that
	if (c) is satisfied, then equality holds.
\end{proof}

\begin{rem}
	Since $\al_{\gam p}\ge1$, from~\eqref{eq:lam^2xi} we have, for $i=1,\dots,s$,
	\[
		|A_{ii}|\ge\frac{|(A)_i|}{|(A)_{i-1}|},
	\]
	where $A=(A_{ij})$ is a symmetric partitioned positive definite
	$s\times s$	block matrix.
	Multiplying this inequality for $i=1,\dots,s$ gives Fischer's inequality
	(cf.~\cite{HornJohnson85}*{p.~506}):
	\[
		\prod_{i=1}^s|A_{ii}|\ge|A|.
	\]
\end{rem}

In particular, if the matrix $A^p$ is a $s\times s$ block matrix of the form
$A^p\in M_s(M_{t_p})$ for each $p=1,\dots,m$,
we obtain the following extension of Oppenheim-Schur's inequality.
The cases $m=2$ and $t_1=t_2=1$ correspond to the usual Oppenheim-Schur's inequality.

\begin{thm}
	If each block matrix $A^p=(A_{ij}^p)\in M_s(M_{t_p})$, $p=1,\dots,m$,
	is positive semidefinite, then we have the following inequality for
	the determinant of the Hadamard product $\bigodot_{p=1}^mA^p$:
	\begin{align} \label{eq:main ineq}
		|\bigodot_{p=1}^mA^p|
		&\ge\prod_{p=1}^m\prod_{i=1}^s|A^p_{ii}|^{\sig_p}
			-\prod_{p=1}^m\Bigl\{
			\prod_{i=1}^s|A^p_{ii}|^{\sig_p}
			-|A^p|^{\sig_p}\Bigr\},
	\end{align}
	with $\sig_p=t_p^{-1}\prod_{q=1}^mt_q$.
	If each $A^p$ is positive definite, then equality holds if and only if
	one of the following conditions holds:
	\begin{enumerate}[(a)]
		\item $m=1$ or $s=1$.
		\item There exists $p$ such that $A^p$ is block diagonal.
		\item For every $p=1,\dots,m$, $t_p=1$ (i.e., $A^p\in M_s$) and
				there exist $i$ and $j$ ($1\le i,j\le s$) such that
				every entry of the matrix $A^p$ is 0 except for the diagonal entries
				and the $(i,j)$ and $(j,i)$ entries.
	\end{enumerate}
\end{thm}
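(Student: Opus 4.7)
The plan is to deduce this from Theorem~\ref{thm:sub ineq} by multiplying the local estimate there over $i=1,\dots,s$ and then using Lemma~\ref{lem:neq_elem} to swap the orders of product. Since all blocks have constant size $n_{ip}=t_p$, the exponents $\sig_{ip}$ in Theorem~\ref{thm:sub ineq} collapse to the single constant $\sig_p$. A standard perturbation $A^p\leadsto A^p+\eps I$ with $\eps\to0^+$ reduces the inequality~\eqref{eq:main ineq} in the positive semidefinite case to the positive definite case by continuity of both sides, so I would assume each $A^p$ is positive definite from the outset.

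Setting $x_{ip}=|A^p_{ii}|^{\sig_p}$ and $y_{ip}=(|(A^p)_i|/|(A^p)_{i-1}|)^{\sig_p}$, the Fischer bound from the remark following Theorem~\ref{thm:sub ineq} gives $x_{ip}\ge y_{ip}$, so I would rewrite the bound of Theorem~\ref{thm:sub ineq} as
\[
\frac{|(\bigodot_{p=1}^m A^p)_i|}{|(\bigodot_{p=1}^m A^p)_{i-1}|}
\ge\prod_{p=1}^m y_{ip}\cdot\Bigl[\prod_{p=1}^m a_{ip}-\prod_{p=1}^m(a_{ip}-1)\Bigr],
\qquad a_{ip}=x_{ip}/y_{ip}\ge1.
\]
Taking the product over $i=1,\dots,s$ makes the left-hand side telescope to $|\bigodot_p A^p|$ and the factor $\prod_i\prod_p y_{ip}$ telescope to $\prod_p|A^p|^{\sig_p}$. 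Applying Lemma~\ref{lem:neq_elem} to the array $(a_{ip})$ converts the remaining factor into $\prod_p\prod_i a_{ip}-\prod_p(\prod_i a_{ip}-1)$. Substituting $\prod_i a_{ip}=\prod_i|A^p_{ii}|^{\sig_p}/|A^p|^{\sig_p}$ and redistributing the $\prod_p|A^p|^{\sig_p}$ factor into each summand then reproduces the right-hand side of~\eqref{eq:main ineq} on the nose.

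For the equality conditions in the positive definite case, equality forces equality at every $i\ge2$ in Theorem~\ref{thm:sub ineq} and simultaneously in Lemma~\ref{lem:neq_elem} for the $s\times m$ array $(a_{ip})$. The main obstacle, and where I would spend most of the work, is matching the equality cases of these two ingredients with (a)--(c). I expect the picture to be: case (ii) of Lemma~\ref{lem:neq_elem} asks that some column $p$ satisfy $a_{ip}=1$ for every $i$, i.e.~Fischer's inequality is saturated at every level for $A^p$, which is the standard characterization of block-diagonality and yields (b); case (iii) isolates all but one row, and combined with case (c) of Theorem~\ref{thm:sub ineq} (which itself requires $n_{ip}=1$ for every $p$) forces $t_p=1$ throughout and the structural identity $A^p_{li}=A^p_{li_0}(A^p_{i_0i_0})^{-1}A^p_{i_0i}$ for every $l<i$, translating into condition (c); and the trivial case (i) of Lemma~\ref{lem:neq_elem} is condition (a). The converse implications are direct substitution checks. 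The delicate bookkeeping is in case (iii): I must track how the auxiliary index $i_0$ from Theorem~\ref{thm:sub ineq}(c) at one level $i$ interacts with the requirement that equality hold at every other level, which is what ultimately singles out a common off-diagonal $(i,j)$ pair and yields the pattern in~(c).
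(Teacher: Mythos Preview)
Your derivation of the inequality is exactly the paper's: reduce to the positive definite case by the perturbation $A^p+\eps I$, quote Theorem~\ref{thm:sub ineq} at each $i$ (with $\sig_{ip}=\sig_p$), telescope the left side, and apply Lemma~\ref{lem:neq_elem} to the $s\times m$ array $(a_{ip})$.

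For the equality analysis the paper takes a somewhat different route from the one you sketch. You propose to combine the stated equality clauses of Theorem~\ref{thm:sub ineq} (at each level $i$) with those of Lemma~\ref{lem:neq_elem} for the coarse array $(a_{ip})$. The paper instead unwinds the proof of Theorem~\ref{thm:sub ineq} and works directly with the fine array $(\alpha_{\gamma p})_{\gamma\in\bigcup_i J_i}$ from~\eqref{eq:lam_xi} together with the equality condition of Theorem~\ref{thm:main_ineq}; in effect it collapses the two nested uses of Lemma~\ref{lem:neq_elem} (one inside Theorem~\ref{thm:sub ineq} over each $J_i$, one over $i$) into a single application over the full index set. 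This gives immediate access to statements of the form ``$\alpha_{\gamma p}=1$ for every $\gamma\neq\gamma_0$ and every $p$'', from which the dichotomy between block-diagonality (some $t_q>1$) and the scalar pattern~(c) (all $t_p=1$) drops out quickly via Lemma~\ref{lem:main_ineq}. Your modular approach is valid in principle---the two-step and one-step uses of Lemma~\ref{lem:neq_elem} give the same lower bound, hence the same equality locus---but the bookkeeping you flag in case~(iii) is genuinely heavier: you must separately treat the sub-case where condition~(b) rather than~(c) of Theorem~\ref{thm:sub ineq} holds at the distinguished row $i_0$, and you must first argue that $a_{ip}=1$ for $i\neq i_0$ already forces $A^p_{li}=0$ for all $l<i$ at those levels, before the structural identity in Theorem~\ref{thm:sub ineq}(c) can be converted into the single off-diagonal pair of condition~(c) here. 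The paper's direct route sidesteps that layering.
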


\begin{proof}
	If $A^p$ is positive semidefinite, then the inequality~\eqref{eq:main ineq}
	is obtained by taking the limit $\eps\to+0$ of one for the positive definite
	matrices $A^p+\eps I$, where $I$ denotes the identity matrix.
	Thus, without loss of generality, we can assume that every $A^p$ is positive definite.
	Moreover, if $m=1$, then equality holds trivially in~\eqref{eq:main ineq},
	and if $s=1$, then~\eqref{eq:main ineq} is also equality,
	since this is the case $i=1$ of Theorem~\ref{thm:sub ineq}.
	Thus, we assume $m\ge2$ and $s\ge2$.
	Similar to the proof of Theorem~\ref{thm:sub ineq}, we have
	\begin{align}
		\frac{|(\bigodot_{p=1}^mA^p)_i|}{|(\bigodot_{p=1}^mA^p)_{i-1}|}
		&\ge\prod_{p=1}^m|A^p_{ii}|^{\sig_p}
			-\prod_{p=1}^m\Bigl\{|A^p_{ii}|^{\sig_p}
				-\Bigl(\frac{|(A^p)_i|}{|(A^p)_{i-1}|}\Bigr)^{\sig_p}
			\Bigr\} \\
		&=\prod_{p=1}^m\Bigl(\frac{|(A^p)_i|}{|(A^p)_{i-1}|}\Bigr)^{\sig_p} \\
		&\qquad\times
		\Bigl[
			\prod_{p=1}^m\Bigl(\frac{|A^p_{ii}||(A^p)_{i-1}|}{|(A^p)_i|}\Bigr)^{\sig_p}
			-\prod_{p=1}^m
				\Bigl\{
					\Bigl(\frac{|A^p_{ii}||(A^p)_{i-1}|}{|(A^p)_i|}\Bigr)^{\sig_p}-1
				\Bigr\}
		\Bigr].
	\end{align}
	Thus,
	\begin{align}
		|\bigodot_{p=1}^mA^p|
		&=\prod_{i=1}^s\frac{|(\bigodot_{p=1}^mA^p)_i|}{|(\bigodot_{p=1}^mA^p)_{i-1}|} \\
			\label{eq:tochu}
		&\ge\prod_{i=1}^s\prod_{p=1}^m\Bigl(\frac{|(A^p)_i|}{|(A^p)_{i-1}|}\Bigr)^{\sig_p} \\
		&\qquad\times\prod_{i=1}^s
			\Bigl[
				\prod_{p=1}^m\Bigl(\frac{|A^p_{ii}||(A^p)_{i-1}|}{|(A^p)_i|}\Bigr)^{\sig_p}
				-\prod_{p=1}^m
					\Bigl\{
						\Bigl(\frac{|A^p_{ii}||(A^p)_{i-1}|}{|(A^p)_i|}\Bigr)^{\sig_p}-1
					\Bigr\}
			\Bigr].
	\end{align}
	Consequently, from Lemma~\ref{lem:neq_elem}, we have
	\begin{align}
		|\bigodot_{p=1}^mA^p|
		&\ge\prod_{p=1}^m\prod_{i=1}^s\Bigl(\frac{|(A^p)_i|}{|(A^p)_{i-1}|}\Bigr)^{\sig_p} \\
		&\qquad\times
			\Bigl[
				\prod_{p=1}^m\prod_{i=1}^s\Bigl(\frac{|A^p_{ii}||(A^p)_{i-1}|}{|(A^p)_i|}\Bigr)^{\sig_p}
				-\prod_{p=1}^m
					\Bigl\{
						\prod_{i=1}^s\Bigl(\frac{|A^p_{ii}||(A^p)_{i-1}|}{|(A^p)_i|}\Bigr)^{\sig_p}-1
					\Bigr\}
					\Bigr] \\
		&=\prod_{p=1}^m|A^p|^{\sig_p}
			\times\Bigl[\prod_{p=1}^m\frac{\prod_{i=1}^s|A_{ii}^p|^{\sig_p}}{|A^p|^{\sig_p}}
				-\prod_{p=1}^m\Bigl\{
					\frac{\prod_{i=1}^s|A_{ii}^p|^{\sig_p}}{|A^p|^{\sig_p}}-1\Bigr\}
				\Bigr] \\
		&=\prod_{p=1}^m\prod_{i=1}^s|A^p_{ii}|^{\sig_p}
		-\prod_{p=1}^m\Bigl\{
		\prod_{i=1}^s|A^p_{ii}|^{\sig_p}
		-|A^p|^{\sig_p}\Bigr\},
	\end{align}
	as desired.

	Now, we consider the equality condition of the inequality~\eqref{eq:main ineq}.
	If we define $\al_{\gam p}$ by~\eqref{eq:lam_xi},
	then the equality condition is derived from those of (ii) and (iii) of
	Lemma~\ref{lem:neq_elem} and those of Theorem~\ref{thm:main_ineq}.
	Consider the case (ii).
	In this case, there exists a $p$ such that $(\lam_{ij}^{A^p})^2\xi(c_{ij}^p)=1$
	for every $j$.
	By Lemma~\ref{lem:main_ineq}, $k_{i'}^{A^p}(c_{i'j'})\perp k_i^{A^p}(c_{ij}^p)$
	for every $i'<i$ and every $j'$.
	Thus,
	\[
		\inner{A_{i'i}^pc_{ij}^p,c_{i'j'}^p}_{C_{i'}}
		=\inner{k_i^{A^p}(c_{ij}^p),k_{i'}^{A^p}(c_{i'j'}^p)}_{\calH_{A^p}}=0.
	\]
	Since $\{c_{i'j'}^p\}$ and $\{c_{ij}^p\}$ span $C_{i'}^p$ and $C_i^p$, respectively,
	we have $A_{i'i}^p=0$.
	Therefore, for some $p$, $A^p$ is an upper triangular block matrix,
	which implies that $A^p$ is a block diagonal matrix, since $A^p$ is Hermitian.
	Conversely, if this is the case, then $\bigodot_{p=1}^mA^p$ is also block diagonal,
	and thus it is easy to see that the inequality~\eqref{eq:main ineq} is an equality.

	In case (iii), there exists a $\gam_0=(ij_1,\dots,ij_m)\in J$ with $\al_{\gam p}=1$
	for every $\gam=(i'j_1',\dots,i'j_m')\in J\sm\{\gam_0\}$ and every $p=1,\dots,m$.
	We divide the cases.
	If there exists a $q$ with $t_q>1$, then, as in the proof of the equality condition
	of Theorem~\ref{thm:sub ineq}, $A^p$ is block diagonal for every $p\ne q$,
	which implies that the equality is trivial.
 
	There remains the case that $t_p=1$ for every $p$, that is, the case of scalar matrices
	or non-block matrices.
	In this case, by the equality condition of Lemma~\ref{lem:main_ineq},
	the $j$-th column of $A^p$ satisfies $A_{kj}^p=A_{jk}^p=0$ ($k=1,\dots,j-1$)
	for every $j\ (\ne i)$.
	From this and the equality condition of Theorem~\ref{thm:main_ineq},
	we easily conclude that there exists an $i_0<i$ such that
	$A_{li}^p=0$ for every $l\ne i_0$ and $l\ne i$.
	Therefore, $A^p$ is the matrix which satisfies the condition (b) of Theorem
	for every $p$.
	Conversely, if this is the case, then
	\[
		|A^p|=(A_{ii}^pA_{jj}^p-|A_{ij}^p|^2)\prod_{k\ne i,j}^sA_{kk}^p.
	\]
	Thus, equality holds in this case (cf.~\cite{Oppenheim30}).
\end{proof}

\begin{rem}
	If the matrices $A^p$ are all positive definite,
	then, by the inequality~\eqref{eq:tochu}, we have
	\begin{align}
		|\bigodot_{p=1}^mA^p|
		&\ge\prod_{p=1}^m|A^p|^{\sig_p} \\
		&\qquad\times\prod_{i=1}^s\Bigl\{
			\prod_{p=1}^m\Bigl(\frac{|A^p_{ii}||(A^p)_{i-1}|}{|(A^p)_i|}
				\Bigr)^{\sig_p}-\prod_{p=1}^m
				\Bigl[
				\Bigl(\frac{|A^p_{ii}||(A^p)_{i-1}|}{|(A^p)_i|}\Bigr)^{\sig_p}.
				-1\Bigr]\Bigr\}
	\end{align}
	Noting that $A_{11}^p=(A^p)_1$ and $(A^p)_0=1$,
	we see that the product $\prod_{i=1}^s$ can be changed to $\prod_{i=2}^s$.
	Thus, for $m\ge3$ the inequality~\eqref{eq:tochu} is sharper than
	the one obtained by~Li-Feng~\cite{LiFeng21}*{Theorem 2.5}.
\end{rem}

% \bib, bibdiv, biblist are defined by the amsrefs package.
\begin{bibdiv}
\begin{biblist}

\bib{AkhiezerGlazman93}{book}{
      author={Akhiezer, N.~I.},
      author={Glazman, I.~M.},
       title={Theory of linear operators in {H}ilbert space},
   publisher={Dover Publications, Inc., New York},
        date={1993},
        ISBN={0-486-67748-6},
        note={Translated from the Russian and with a preface by Merlynd
  Nestell, Reprint of the 1961 and 1963 translations, Two volumes bound as
  one},
      review={\MR{1255973}},
}

\bib{Aronszajn50}{article}{
      author={Aronszajn, N.},
       title={Theory of reproducing kernels},
        date={1950},
        ISSN={0002-9947},
     journal={Trans. Amer. Math. Soc.},
      volume={68},
       pages={337\ndash 404},
      review={\MR{MR0051437 (14,479c)}},
}

\bib{Groetsch77}{book}{
      author={Groetsch, C.~W.},
       title={Generalized inverses of linear operators. {Representation} and
  approximation},
      series={Pure Appl. Math., Marcel Dekker},
   publisher={Marcel Dekker, Inc., New York, NY},
        date={1977},
      volume={37},
}

\bib{HornJohnson85}{book}{
      author={Horn, R.~A.},
      author={Johnson, C.~R.},
       title={Matrix analysis},
   publisher={Cambridge University Press},
     address={Cambridge},
        date={1985},
        ISBN={0-521-30586-1},
      review={\MR{832183 (87e:15001)}},
}

\bib{HornJohnson91}{book}{
      author={Horn, R.~A.},
      author={Johnson, C.~R.},
       title={Topics in matrix analysis},
   publisher={Cambridge etc.: Cambridge University Press},
        date={1991},
        ISBN={0-521-30587-X},
}

\bib{LiFeng21}{article}{
      author={Li, Yongtao},
      author={Feng, Lihua},
       title={An {Oppenheim} type determinantal inequality for the
  {Khatri}-{Rao} product},
        date={2021},
        ISSN={1846-3886},
     journal={Oper. Matrices},
      volume={15},
      number={2},
       pages={693\ndash 701},
}

\bib{Lin14}{article}{
      author={Lin, Minghua},
       title={An {Oppenheim} type inequality for a block {Hadamard} product},
        date={2014},
        ISSN={0024-3795},
     journal={Linear Algebra Appl.},
      volume={452},
       pages={1\ndash 6},
}

\bib{Oppenheim30}{article}{
      author={Oppenheim, A.},
       title={{Inequalities connected with definite Hermitian forms.}},
        date={1930},
     journal={J. London Math. Soc.},
      volume={5},
      number={2},
       pages={114\ndash 119},
}

\bib{ReedSimon72}{book}{
      author={Reed, Michael},
      author={Simon, Barry},
       title={Methods of modern mathematical physics. {I}. {F}unctional
  analysis},
   publisher={Academic Press, New York-London},
        date={1972},
      review={\MR{0493419}},
}

\bib{Saitoh88}{book}{
      author={Saitoh, Saburou},
       title={Theory of reproducing kernels and its applications},
      series={Pitman Research Notes in Mathematics Series},
   publisher={Longman Scientific \& Technical},
     address={Harlow},
        date={1988},
      volume={189},
        ISBN={0-582-03564-3},
      review={\MR{MR983117 (90f:46045)}},
}

\bib{Sarason94}{book}{
      author={Sarason, Donald},
       title={Sub-{H}ardy {H}ilbert spaces in the unit disk},
      series={University of Arkansas Lecture Notes in the Mathematical
  Sciences, 10},
   publisher={John Wiley \& Sons Inc.},
     address={New York},
        date={1994},
        ISBN={0-471-04897-6},
        note={A Wiley-Interscience Publication},
      review={\MR{MR1289670 (96k:46039)}},
}

\bib{Schwartz64}{article}{
      author={Schwartz, L.},
       title={Sous-espaces hilbertiens d'espaces vectoriels topologiques et
  noyaux associ\'es (noyaux reproduisants)},
        date={1964},
        ISSN={0021-7670},
     journal={J. Analyse Math.},
      volume={13},
       pages={115\ndash 256},
      review={\MR{0179587 (31 \#3835)}},
}

\bib{Yamada09}{article}{
      author={Yamada, Akira},
       title={Equality conditions for norm inequalities in reproducing kernel
  {H}ilbert spaces},
        date={2009},
        ISSN={1331-4343},
     journal={Math. Inequal. Appl.},
      volume={12},
      number={2},
       pages={377\ndash 390},
      review={\MR{2521393 (2010f:46048)}},
}

\bib{Yamada12}{article}{
      author={Yamada, Akira},
       title={Oppenheim's inequality and {RKHS}},
        date={2012},
        ISSN={1331-4343},
     journal={Math. Inequal. Appl.},
      volume={15},
      number={2},
       pages={449\ndash 456},
}

\bib{ZhangDing09}{article}{
      author={Zhang, X.-D.},
      author={Ding, C.-X.},
       title={The equality cases for the inequalities of {O}ppenheim and
  {S}chur for positive semi-definite matrices},
        date={2009},
        ISSN={0011-4642},
     journal={Czechoslovak Math. J.},
      volume={59(134)},
      number={1},
       pages={197\ndash 206},
         url={http://dx.doi.org/10.1007/s10587-009-0014-6},
      review={\MR{2486625 (2010a:15056)}},
}

\end{biblist}
\end{bibdiv}

\end{document}